\documentclass[a4paper, 11pt]{article}
\usepackage{graphicx} 
\usepackage{amsmath, relsize} 
\usepackage{todonotes} 
\usepackage{amsthm} 
\usepackage{amsfonts}
\usepackage{bbm}
\usepackage{amssymb}
\usepackage{array}
\usepackage{url}
\usepackage{amsmath}
\usepackage{amssymb}
\usepackage{amsthm}
\usepackage{bbm}

\bibliographystyle{apalike} 

\usepackage{enumerate}
\usepackage{setspace}
\usepackage{graphicx,color,hyperref}
\usepackage[margin=0.885in]{geometry}
 \bibliographystyle{alpha}
\parskip \medskipamount
\parindent	0pt


\newtheorem{theorem}{Theorem}[section]
\newtheorem{proposition}[theorem]{Proposition}

\newtheorem{lemma}[theorem]{Lemma}
\theoremstyle{definition}
\newtheorem{definition}[theorem]{Definition}
\newtheorem{remark}[theorem]{Remark}




\numberwithin{equation}{section}

\begin{document}

\title{Essential enhancements in Abelian networks: \\ continuity and uniform strict monotonicity}
\author{Lorenzo Taggi \\
{ \small \textit{Sapienza Universit\`a di Roma} } \\
{\small \textit{Dipartimento di Matematica `Guido Castelnuovo'}}}
\maketitle

\begin{abstract}
We prove  that  in wide generality the critical curve of the activated random walk model  is a continuous function of the deactivation rate, and  we provide a  bound on its slope which is uniform with respect to the choice of the graph. Moreover, we derive strict monotonicity properties for the probability of a wide class of  `increasing' events,
extending previous results of Rolla and Sidoravicius (2012).  Our proof method is of independent interest and can be viewed as a reformulation of the `essential enhancements'  technique -- which was introduced for percolation -- in the framework of Abelian networks.
\end{abstract}


\section{Introduction}
\label{sect:Intro}
The activated random walk model (ARW) is a particle system with conserved number of particles. It is a special case of a class of models introduced by Spitzer in the '70s and it is not only of great mathematical interest but also physically  relevant due to its connections to  \textit{self-organised criticality}  \cite{Dickman}. The informal definition  of the model is as follows. Let $G = (V, E)$ be a infinite  undirected unimodular graph (for example $\mathbb{Z}^d$ or a regular tree). Each particle can either be of type A (active) or of type S (sleeping, or inactive). At time zero, the number of particles is sampled according to a Poisson distribution with parameter  $\mu \in [0, \infty)$ independently at every vertex, where $\mu$ is the \textit{particle density}, and every particle is of type A. 
An independent exponential clock with rate $\lambda \in [0, \infty)$, the \textit{deactivation rate,} is associated to every active particle. 
Every A-particle performs a continuous time simple random walk independently until its own clock rings.
When this happens, the A-particle turns into the S-state. Every S-particle is at rest.  Moreover, whenever a  S-particle shares the vertex with an A-particle, the S-particle is instantaneously activated, i.e, it becomes an A-particle. It follows from this definition that, almost surely, a particle of type S can be observed only if it does not share the vertex with other particles.

Let $\mathbb{P}_{\lambda, \mu}$ be the probability measure of the interacting particle system defined informally above, whose existence on unimodular graphs was proved in \cite{Rolla2}. A central and  natural question is whether the dynamics dies out with time or whether it is sustained at all times. More precisely, we say that the system \textit{fixates} if for every finite set  $A \subset V$ there exists a time $t_A < \infty$ such that for any time $t > t_A$ no active particle jumps from a vertex of $A$, and that it is \textit{active} if it does not fixate. The \textit{critical  density} is defined as,
\begin{equation}\label{eq:criticaldensity}
\forall \lambda \in [0, \infty), \quad \mu_c(\lambda) := 
\inf \big \{ \mu \in \mathbb{R}^+_{0} \, \, : \, \, 
\mathbb{P}_{\lambda, \mu} ( \,  \mbox{ARW is active }  \,  ) > 0  \big \}.
\end{equation}
It was proved in  \cite{Rolla} that  the probability
that the model is active 
is either zero or one, that it does not decrease with $\mu$ and does not increase with $\lambda$.
This ensures
the existence of a unique transition point between the regime of  a.s. local fixation and the regime of a.s. activity.
In recent years significant effort has been made for proving basic properties of the critical curve, 
$\mu = \mu_c(\lambda)$. 
It was proved in \cite{Stauffer} that $\mu_c(\lambda) \geq \frac{\lambda}{1+\lambda}$ in any vertex-transitive graph, generalising and extending previous results from \cite{Rolla, Sidoravicius}.
It is known from \cite{Shellef} that $\mu_c(\lambda) \leq 1$ for any $\lambda \in [0, \infty)$ in wide generality.  It was proved  in \cite{Forien, Rolla2,  Stauffer,  Taggi, Taggi2} that on various graphs $\mu_c(\lambda) < 1$ for any $\lambda \in (0, \infty)$  and that $\mu_c(\lambda) \rightarrow 0$ as  $\lambda \rightarrow 0$.
Moreover, it was proved in  \cite{Asselah, Basu} that, on $\mathbb{Z}$,
 $\mu_c(\lambda) = O(  \sqrt{ \lambda } )$ in the limit as $\lambda \rightarrow 0$.
It was proved in \cite{Rolla3} that the critical density is universal. Our first main theorem states a new general property of the critical curve, namely that it is a continuous function of the deactivation parameter $\lambda$.

\begin{theorem}\label{theo:continuity}
On any unimodular  graph the two following properties hold:
\begin{enumerate}
\item $\mu_c(\lambda)$  is a continuous function of $\lambda$ in  $(0, \infty)$,
\item 
for any $\lambda \in (0, \infty)$,
$\limsup\limits_{ \delta \rightarrow 0} \, \,  
\frac{\mu_c( \lambda + \delta) - \mu_c(\lambda)}{\delta} \leq \frac{1}{\lambda (1 + \lambda)}.$
\end{enumerate}
\end{theorem}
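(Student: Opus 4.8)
The plan is to recast the argument as an \emph{essential enhancement}: increasing the deactivation rate from $\lambda$ to $\lambda+\delta$ is a ``de-enhancement'' which can be compensated by raising the particle density by an amount of order $\frac{\delta}{\lambda(1+\lambda)}$. I would work in the Diaconis--Fulton / toppling representation: at each site $x$ place an i.i.d.\ stack of instructions, each of which is ``put a particle to sleep'' with probability $p_\lambda := \frac{\lambda}{1+\lambda}$ and otherwise ``move a particle to a uniformly chosen neighbour'', starting from a Poisson($\mu$) field of active particles. By the $0$--$1$ law and the Abelian property, ``ARW is active'' is equivalent (a.s.) to the divergence, as $n\to\infty$, of a monotone finite-volume functional — for instance the odometer at the origin for the stabilisation inside the box $\Lambda_n$ — which is non-decreasing in the initial configuration and non-increasing, coordinatewise, when stack instructions are turned into ``sleep''.

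Next I would make the ``de-enhancement'' explicit through a coupling: a rate-$(\lambda+\delta)$ instruction stack has the same law as a rate-$\lambda$ stack in which each instruction is independently overwritten by a ``sleep'' instruction with probability $\rho_\delta := \frac{\delta}{1+\lambda+\delta}$, and $\rho_\delta/\delta \to \frac1{1+\lambda}$ as $\delta\to 0$. Thus the comparison reduces to showing that adding a Poisson($\nu$) field of extra active particles compensates for this random overwriting, provided $\nu$ exceeds, to leading order, $\frac1\lambda\,\rho_\delta \approx \frac{\delta}{\lambda(1+\lambda)}$; the factor $\frac1\lambda$ is the expected number of jumps a freshly inserted active particle performs before it would fall asleep (a Geometric($p_\lambda$) variable has mean $\frac{1-p_\lambda}{p_\lambda}=\frac1\lambda$), i.e.\ the ``work'' one extra particle can do to offset one wasted instruction.

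The core of the proof is this compensation step, which I would carry out via a Russo-type differential inequality for the probability of a suitable increasing event $A$ in $\Lambda_n$, uniformly in $n$: differentiating with respect to $\mu$ (inserting an extra active particle) and with respect to $\lambda$ (inserting, through the coupling above, an extra ``sleep'' overwrite) and proving
\[
\frac{\partial}{\partial \mu}\, \mathbb{P}_{\lambda,\mu}(A) \;\geq\; \lambda(1+\lambda)\,\Bigl(-\frac{\partial}{\partial \lambda}\, \mathbb{P}_{\lambda,\mu}(A)\Bigr)\,\bigl(1 - o(1)\bigr),
\]
with the $o(1)$ uniform in $n$. The mechanism behind the inequality is that, using the Abelian property, every instruction overwrite that is pivotal for $A$ (its presence prevents $A$) can be matched with a site at which adding an active particle is pivotal for $A$ (its addition restores $A$) — this re-uses and extends the strict-monotonicity bookkeeping of Rolla--Sidoravicius — while the change of variables between ``probability of an overwrite at a given instruction'' ($\asymp \delta/(1+\lambda)$) and ``probability of an extra particle at a given site, weighted by its effect'' supplies the combinatorial factor $\lambda(1+\lambda)$.

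Finally, integrating the comparison gives: if the ARW is active at $(\lambda,\mu)$ then it is active at $(\lambda+\delta,\ \mu+\Delta_{\lambda,\delta})$ for some $\Delta_{\lambda,\delta}\geq 0$ with $\Delta_{\lambda,\delta}/\delta\to\frac1{\lambda(1+\lambda)}$ as $\delta\to 0$. Letting $\mu\downarrow\mu_c(\lambda)$ yields $\mu_c(\lambda+\delta)-\mu_c(\lambda)\leq\Delta_{\lambda,\delta}$, which gives the right-hand side of (ii); applying the same comparison with the two parameters in the opposite order bounds $\mu_c(\lambda)-\mu_c(\lambda-\delta)$ by a quantity of the same order, so the full $\limsup$ in (ii) follows, and together with the fact that $\mu_c$ is non-decreasing (from the monotonicity of the activity probability in $\lambda$) this forces $\mu_c$ to be locally Lipschitz on $(0,\infty)$, hence continuous, proving (i). I expect the compensation step to be the main obstacle: unlike in percolation, one must first isolate an increasing event that simultaneously captures activity and has a tractable pivotal structure, then transport a pivotal ``sleep'' into a pivotal extra particle through the Abelian dynamics with the correct weight, and finally make every error term uniform in the box size so that the differential inequality survives the limit $n\to\infty$.
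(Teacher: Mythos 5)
Your high-level blueprint matches the paper: graphical representation, a Russo-type formula in which $\partial_\lambda$ counts pivotal sleep events and $\partial_\mu$ counts pivotal particle insertions, a pivotality-matching step using the Abelian property, the differential inequality $-\partial_\lambda\mathcal P_{\lambda,\mu}(\mathcal A)\le\frac{1}{\lambda(1+\lambda)}\partial_\mu\mathcal P_{\lambda,\mu}(\mathcal A)$ on finite boxes, and an integration step yielding $(ii)$ and, combined with monotonicity of $\mu_c$, $(i)$. However, there is a genuine problem in the way you set up the coupling, and the key structural lemma that produces the exact factor $\frac{1}{\lambda(1+\lambda)}$ is only gestured at.

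The coupling you propose -- independently overwriting each instruction by $\boldsymbol{s}$ with probability $\rho_\delta=\delta/(1+\lambda+\delta)$ -- does reproduce the marginal law, but it is not monotone in the partial order that the monotonicity lemma and the pivotality argument require. In the representation, $\tau'\le\tau$ means the \emph{jump sequences agree}, $J_\tau^{x,m}=J_{\tau'}^{x,m}$ for all $(x,m)$, while the number of sleeps between consecutive jumps decreases; it is this order that makes the odometer and jump-odometer monotone. Overwriting a position that happened to carry a jump instruction re-indexes the jump sequence at that site, so the coupled pair $(\tau_\lambda,\tau_{\lambda+\delta})$ is generally \emph{not} comparable, the event ``$(\eta_\mu,\tau_\lambda)\in\mathcal A$, $(\eta_\mu,\tau_{\lambda+\delta})\notin\mathcal A$'' no longer captures the full decrement $\mathcal P_{\lambda,\mu}(\mathcal A)-\mathcal P_{\lambda+\delta,\mu}(\mathcal A)$, and the pivotal-event identification breaks down. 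The paper's coupling instead fixes the $m$-th jump instruction once and for all (independent of $\lambda$) and lets only the geometric sleep count between consecutive jumps grow with $\lambda$; this preserves the partial order, and it is what makes the limit in the Russo formula come out cleanly.

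The second gap is the matching step. You write that each pivotal sleep can be matched with a site where adding a particle is pivotal, which is the right idea, but the factor $\frac{1}{\lambda(1+\lambda)}$ is not a ``work per extra particle'' heuristic; it comes from two concrete facts. First, the paper shows that for the events at hand \emph{at most one index $j$ per site} can be sleeping-essential, namely $j=M_K(y)$, because removing a sleep block at any other index leaves the jump-odometer unchanged by the Abelian property; this collapses the sum over $j$ in Russo's formula into a single term, and conditioning on that sleep block being nonempty introduces a factor $\frac{1+\lambda}{\lambda}$, which combines with the $(\frac{1}{1+\lambda})^2$ from differentiating $\frac{\lambda}{1+\lambda}$ to give exactly $\frac{1}{\lambda(1+\lambda)}$ with no $o(1)$ correction. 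Second, one needs a genuine inclusion, proved via a controlled domain stabilisation, that a sleeping-essential pair at $(y,M_K(y))$ with a nonempty sleep block forces $(y,\eta(y))$ to be particle-essential. Without the first fact your differential inequality would come with an uncontrolled sum over $j$; without the second, the transport of pivotality has no proof. Your proposal would need these two lemmas spelled out, together with the corrected coupling, to go through.
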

Continuity of the critical curve at $\lambda = 0$ (more precisely, right-continuity, namely $\lim_{ \lambda \rightarrow 0^+  } \mu_c(\lambda) = \mu_c(0) = 0$)
in $\mathbb{Z}^d$ was proved in
\cite{Basu, Forien, Stauffer}.
 Our Theorem \ref{theo:continuity} generalizes such a continuity property to all positive values of $\lambda$ and holds for any  unimodular  graph. 
Even though the critical curve is expected to strongly depend on the graph, the  second claim of Theorem \ref{theo:continuity} provides a bound on its slope  which is uniform with respect to the choice of the graph.
Moreover,  the assumption that the graph is unimodular  is only required to give sense to the continuous time dynamics and then to (\ref{eq:criticaldensity}).
  For a more general notion of critical density  (see equation (\ref{eq:generaldefinitiondensity})  and Remark \ref{rem:generality} below) our theorem holds on any  locally-finite  infinite connected graph.

\subsection{Strict monotonicity properties}
Our first main theorem is a consequence of our second  theorem, which derives  new general monotonicity properties for the probability of a wide  class of  events, 
which will be referred to as `relevant'. 
This  class includes  all the events which are increasing and which depend
on how many times the vertices are visited by the particles
(we refer to Section \ref{sect:relations} for a precise definition). 
For example, the event $\mathcal{A} = \{  \forall x \in K, M(x) > H(x) \}$,
where $K \subset V$ is finite, $M(x)$ is the number of times the active particles jump from $x$ and 
$ ( H(x)   )_{x \in K}$ is any integer-valued vector,  is relevant.
 This is an important class of events, since one can deduce 
whether the system fixates or is active by  determining the limiting probability
of appropriately defined sequences of these events. 

The derivation of  monotonicity properties is very useful and allows a deeper understanding of the model. 
From the definition of the activated random walk dynamics it is reasonable to expect that 
the probability of any relevant event is \textit{non-increasing} with respect to $\lambda$ and \textit{non-decreasing} with respect to $\mu$.
The proof of this claim is non-trivial and was derived in \cite{Rolla} by employing a graphical representation. 

Here we address a related question, namely do monotonicity properties hold if we increase the deactivation rate and the particle density \textit{at the same time}? This question is challenging,  since  the increase of the deactivation rate and  of the particle density play against each other.  Indeed,  higher deactivation rate implies that the model is `less active', while higher particle density implies that the model is `more active'. 
Our Theorem \ref{theo:strict monotonicity} below 
studies a regime where a positive increase in $\mu$ compensates for a small enough increase in $\lambda$.  
More precisely,   if take an arbitrary point of the phase diagram, 
$(\lambda, \mu)  \in \mathbb{R}_+^2$, and we  move up-right along a semi-line line which starts from $(\lambda, \mu)$  and  whose slope, $s$, satisfies
$ s \geq \frac{1}{\lambda(1 + \lambda)}$, then  the probability of the  event does not decrease.
Remarkably, our estimate on the minimal slope is uniform not only with respect to the choice of the graph, but also with respect to the choice of the event, provided that it is relevant.  The monotonicity result of Rolla and Sidoravicius \cite{Rolla} can thus be viewed as corresponding to the special case $s = \infty$ of our theorem.
\begin{theorem} \label{theo:strict monotonicity}
Consider any unimodular graph,  let  $\mathcal{A}$ be any relevant event.  Let $(\lambda, \mu) \in  \mathbb{R}_+^2$ be an arbitrary point of the phase diagram, let 
$
\mathcal{C}_{\lambda, \mu}
$
be the region above the semi-line with slope  $\frac{1}{\lambda(1+\lambda)}$ which starts from $(\lambda, \mu)$,
\begin{equation}\label{eq:curve}
\mathcal{C}_{\lambda, \mu} : = \Big \{
(x, y) \in \mathbb{R}^2 \, \, : \, \, y \geq  \frac{1}{\lambda(1 + \lambda)} \, \, (\, x \, - \, \lambda) \,  + \, \mu, \,  \, x \geq \lambda \, 
\Big \} .
\end{equation}
Then, 
for any pair $(\lambda^\prime, \mu^\prime) \in \mathcal{C}_{\lambda, \mu}$,
\begin{equation}\label{eq:statementmonotonicity}
 \mathbb{P}_{  \lambda, \, \mu  } ( \mathcal{A})
 \leq \mathbb{P}_{  \lambda^\prime,\,  \mu^\prime  } ( \mathcal{A}).
\end{equation}
\end{theorem}
As we show in Section \ref{sect:relations} below, relevant events can be defined 
in the framework of the Diaconis-Fulton representation, which is well-defined on any locally-finite graph.  Hence our theorem can be stated in wider generality, see Remark \ref{rem:generality} below.

\subsection{Proof method: Essential enhancements}
Our proof method can be viewed as a reformulation  of the `Essential enhancements' 
technique -- which was mostly used in Percolation \cite{Aizenman, Ballister} -- in the framework of Abelian networks.
Our method may find applications in the study of other  Abelian models, for example the frog model \cite{Hoffman}, 
oil and water \cite{Candellero1, Candellero2},  the stochastic sandpile model \cite{Rolla}, the Abelian sanpdiles  \cite{Jarai}, see also \cite{Bond}.
Our proof uses the setting of the Diaconis-Fulton graphical representation \cite{Rolla}, where some random instructions -- operators which act on the  particle configuration moving active particles to their neighbours or trying to let the A-particle turn into a S-particle -- are used to mimic the dynamics without employing the variable `time'. Such a graphical representation fulfils the fundamental Abelian property which, informally, states that the relevant quantities -- for example the number of times an active particle jumps from each vertex  -- do not depend on the order according to which such instructions are used. 

Our proof is divided into three main steps.
The \textit{first step} of the proof is the derivation of a Russo's formula \cite{Russo} -- which is a classical formula in percolation -- for  activated random walks, Theorem \ref{theo:Russo} below. This formula  relates the partial derivative with respect to $\lambda$ of the probability of  any relevant event  to the expected number of instructions which are `sleeping essential'  for the event. Such instructions will be defined later and, informally, are those instructions whose removal would cause the occurrence of the event. Similarly, such a formula  relates the partial derivative with respect to $\mu$ of the probability of any relevant  event to the expected number of vertices which are `particle essential' for the event, namely vertices such that the addition of one more particle there would cause the occurrence of the event.
In the \textit{second step} of the proof we derive the following differential inequality, which holds for any  relevant event $\mathcal{A}$,
\begin{equation}\label{eq:inequality}
- \frac{\partial }{\partial \lambda } \mathcal{P}_{\lambda, \mu  }( \mathcal{A} ) \leq \frac{1}{\lambda(1 + \lambda)} 
\frac{\partial }{\partial \mu } \mathcal{P}_{\lambda, \mu  }( \mathcal{A} ),
\end{equation}
where $\mathcal{P}_{\lambda, \mu  }$ is the law of the initial particle configuration and of the random instructions. The two following properties of the \textit{odometer} -- a fundamental quantity which counts how many times an active particle jumps from each  vertex -- are derived and used  for the proof of (\ref{eq:inequality}). The first property is that the removal of  a `sleep' instruction does not affect the value of the odometer,  unless such a removed instruction occupies a very specific location in the array of instructions. Such a property allows us to the deduce that, on each vertex, at most one instruction is `sleeping essential'. The second property states that if the removal of a sleep instruction lets the event $\mathcal{A}$ occur, then also the addition of a particle at the same vertex lets  $\mathcal{A}$ occur, provided that $\mathcal{A}$ is relevant.  This leads to the conclusion that if on a vertex we have a sleeping-essential instruction, then the vertex is also particle-essential. 
Such two properties combined allow the comparison between the partial derivatives and lead to (\ref{eq:inequality}).
In the \textit{third step} we derive our monotonicity theorem by using the differential inequality, (\ref{eq:inequality}), and we derive our main continuity  theorem by using our monotonicity theorem.

We conclude with some natural questions which might be answered by further developing our framework. To begin, the derivation of the inverse of the inequality (\ref{eq:inequality}) (with some other     positive and bounded  constant uniformly  in $\mathcal{A}$ in place of $\frac{1}{\lambda(1+\lambda)}$) would allow us to answer the following open question.

\vspace{0.3cm}

\textit{\textbf{Open Problem 1.}} \textit{Prove that $\mu_c(\lambda)$ is {strictly increasing} with respect to $\lambda$.} 

\vspace{0.3cm}
Our proof shows that the critical density is a continuous function of the deactivation rate and provides a bound for its right and left derivatives, but unfortunately it does not  show that  the right and left derivatives coincide.  This considerations lead to the following natural question, to which we expect the answer to be positive. 

\vspace{0.3cm}

\textit{\textbf{Open Problem 2.}}
\textit{Prove that  $\mu_c(\lambda)$ is differentiable with respect to $\lambda$. }

\vspace{0.3cm}
In the framework of percolation   differentiability properties of several quantities of interest have been studied for example in \cite{Russo2}.



\vspace{0.3cm}

\textit{{Organisation of the paper.}}
This paper is organised as follows. 
In Section \ref{sec:Diaconis} we recall the properties of the Diaconis-Fulton representation.
 In Section
\ref{sect:relations} we introduce the main definitions and discuss the properties of the jump odometer. 
In Section \ref{sect:Russo}
we present  the equivalent of Russo's formula for activated random walk. 
In Section \ref{sect:proofs}  we present the proof of (\ref{eq:inequality}). In Section \ref{sect:theoremsproof} we  present the proof of our main theorems, Theorem
 \ref{theo:continuity}  and  \ref{theo:strict monotonicity}.

\section*{Notation}
We use the notation $\mathbb{N} = \{1, 2,  3, \ldots\}$, $\mathbb{N}_0 = \{0, 1, 2, \ldots \}$,   $\mathbb{R}_+ = \{ x \in \mathbb{R} \, : \, x > 0  \}$, and
  $\mathbb{R}^+_0 = \{ x \in \mathbb{R} \, : \, x \geq 0  \}$
  and the convention $\inf\{ \emptyset \} = \infty$.  
  The following table presents part of the notation which is introduced in Sections \ref{sec:Diaconis} and \ref{sect:relations} below.
  
  
\begin{center}
	\begin{tabular}{ l l }
	$\eta  = (\eta(x))_{x \in V}$ & particle configuration \\
	$\tau = (\tau^{x,j})_{x \in V, j \in \mathbb{N}_0}$ & array of instructions  \\
		 $\mathcal{H} \times \mathcal{I}$ & set of realisations, with $\eta \in \mathcal{H}$ and $\tau \in \mathcal{I}$\\
		 		   $\mathcal{S} \subset \mathcal{H} \times \mathcal{I}$ & smallest $\sigma$-algebra generated by open subsets  of $\mathcal{H} \times \mathcal{I}$  \\
		  $\mathcal{P}_{\lambda, \mu}$ & probability measure on $\mathcal{H} \times \mathcal{I}$ \\
		 	 $\mathcal{H}_a = \mathbb{N}_0^V$ & set of particle configurations with only active particles \\
		$\tau_{x\rho}$, $\tau_{xy}$ & sleep instruction at $x$,  instruction `jump from $x$ to $y$' \\
				$J^{x,\ell}_{\tau}$, with $\ell \geq 1$ &  $\ell$th jump instruction of $\tau$ at $x$, \\	
				$m_{K, \eta, \tau}$,  	$M_{K, \eta, \tau}$ & odometer, jump odometer \\
		 	  $\mathcal{W} \subset \mathcal{H} \times \mathcal{I}$ & 
		    $\mathcal\{ (\eta, \tau) \in \mathcal{H}  :  m_{K, \eta, \tau}(x) < \infty$ for any finite $K \subset V$ and  $x \in K \}$  \\
				$S^{x,\ell}_{\tau}$ & number of s. instr.  between the $\ell-1$th and the $\ell$th j. instr.  \\		
	$\Gamma_-^{x,k}(\tau)$ & array with no s.  instr.  between the $k-1$th and the $k$th j. instr.  \\
		$\Gamma_1^{x,k}(\tau)$ & array with one s.  instr.  between the $k-1$th and the $k$th j.  instr. \\
$\nu_j ,  \,   \nu_{> j} $ & probability that a vertex
 hosts $j$ (resp.  $>  j$) particles  \\
$\nu^{\prime}_{ >j} $  & derivative of $\nu_{>j}= \nu_{>j}(\mu)$  \\
	\end{tabular} 
\end{center}

\section{Definitions and graphical representation}
\label{sec:Diaconis}
In this section we  introduce the Diaconis-Fulton graphical representation for the dynamics of ARW,  partially following~\cite{Rolla}.

\subsection{Particle configuration and array of instructions}
To begin, we fix a graph $G=(V,E)$, which is always assumed to be {undirected},
  {connected},  {infinite},  and  {locally finite}. For any $x \in V$, we denote by $d_x$ the {degree of the vertex $x$}, which corresponds to the number of vertices  which are connected to $x$ by an edge. 
We refer to the arbitrary chosen vertex $o \in V$ as  \textit{root}. 
We write $x \sim y$ when $x$ and $y$ are neighbours, i.e, $\{x,y\} \in E$.
The set of particle configurations is denoted by $\mathcal{H}=\{0,\rho,1,2,3,\ldots\}^V$, where a vertex being in state $\rho$ denotes that the vertex has  one S-particle, while being in state $i\in\{0,1,2,\ldots\}$ denotes that the vertex contains $i$ A-particles. We employ the following order on the states of a vertex: $0 < \rho < 1<2<\cdots$. In a configuration $\eta\in \mathcal{H}$, a vertex $x \in V$ is called \textit{stable} if $\eta(x) \in \{0, \rho \}$, and it is called \textit{unstable} if $\eta(x) \geq 1$.
We denote by $\mathcal{I}$ the \textit{set of arrays of instructions}, i.e, each element of $\mathcal{I}$ is an  array of instructions $\tau = \big ( \tau^{x,j} \big )_{x \in V,  j \in \mathbb{N}_0}$, where for each $x \in V$ and $j \in \mathbb{N}_0$,
$$
\tau^{x,j} \in \{ \tau_{x\rho}  \} \cup \{ \tau_{xy} \, : \,  y \sim x \},
$$
where  $\tau_{xy}$ and  $\tau_{x\rho}$, 
called \textit{jump} and \textit{sleep} \textit{instruction} respectively, are operators acting on the  particle configuration which are defined as follows. Given any configuration $\eta$ such that $x$ is unstable, performing the instruction $\tau_{xy}$ in $\eta$ yields another configuration $\eta'$ such that $\eta'(z)=\eta(z)$ for all $z\in V\setminus\{x,y\}$, $\eta'(x)=\eta(x)- \mathbbm{1}\{\eta(x)\geq 1\}$, and $\eta'(y)=\eta(y)+\mathbbm{1}\{\eta(x)\geq 1\}$. We use the convention that $1+\rho=2$,
while $k - 1$ is defined only if $k \geq 1$.
Similarly, performing the instruction $\tau_{x\rho}$ to $\eta$ yields a configuration $\eta'$ such that 
$\eta'(z)=\eta(z)$ for all $z\in V\setminus \{x\}$, and if $\eta(x)=1$ we have $\eta'(x)=\rho$, otherwise $\eta'(x)=\eta(x)$.
Note that $\tau_{x \rho}$ and $\tau_{xy}$ cannot be applied if $\eta(x) \in \{0, \rho\}$.

\subsection{Use of the instructions and stabilisation of a set}
Fix  a particle configuration $\eta \in \mathcal{H}$ and an instruction array $\tau \in \mathcal{I}$.
We say that the instruction $\tau^{x,j}$ is \textit{legal} for $\eta$ if $x$ is unstable in $\eta$,
otherwise it is \textit{illegal}.
We say that we \textit{use the instruction} $(x,j)$, $x \in V$, $j \in \mathbb{N}$, of the array $\tau$ for $\eta$, or that we use the instruction  $\tau^{x,j}$ for $\eta$, 
when  we act on the current
particle configuration $\eta$ through the operator $\tau^{x,j}$.
When we use an instruction $(x,j)$ for some $j \in \mathbb{N}$, sometimes we may simply say that  `we topple $x$'.
Let $\alpha$ be a sequence 
 $$\alpha = \big ( (x_1, n_1), (x_2, n_2), \ldots, (x_k, n_k) \big ),$$
 define the operator $\Phi_{\alpha, \tau}$ as 
 $$
 \Phi_{\alpha, \tau} : = \tau^{x_k,n_k} \, 
 \ldots \,  \tau^{x_2,n_2} \, \tau^{x_1,n_1},
 $$
 and for $1 \leq \ell \leq k$  define the subsequence $\alpha^{(\ell)} := 
 \big ( (x_1, n_1), (x_2, n_2), \ldots, (x_\ell, n_\ell) \big ).
 $
 We say that $\alpha$ is a \textit{legal sequence} for $\eta$
 if the three following properties hold at the same time:
\begin{enumerate}
\item[(i)] For any $x \in V$, let $u_x := 
\inf \{\ell \in \{1, \ldots, k \} : x_\ell = x\}$.
If $u_x < \infty$, then $n_{u_x}= 0$. In other words, the first instruction which is used at any vertex $x$  is $\tau^{x,0}$.
\item[(ii)] For any $i \in \{1, \ldots  k-1\}$, let $j(i) := \inf \{\ell > i : x_\ell = x_i\}$. If $j(i) < \infty$, then $n_{j(i)} = n_{i} + 1$. In other words, every time we use an instruction,  we use the one which is located `right above' the one which was used right before at the same vertex. 
\item[(iii)]
For any $i \in \{1, \ldots, k\}$, 
$\tau^{x_i, n_i}$ is legal for  
$\eta_{i-1} : =\Phi_{\alpha^{(i-1)}, \tau} \,  \eta.$
\end{enumerate}
Let $m_{\alpha} =  ( m_{\alpha}(x) \, \, : \, \, x \in V  )$
be given by
 $m_{\alpha}(x) \, = \, \sum_{i \in \{1, \ldots, k\}} \mathbbm{1}\{{x_i = x}\},$
the number of times the vertex $x$ appears in $\alpha$.
Let $M_{\alpha, \tau} =  ( M_{\alpha, \tau}(x) \, \, : x \in V  )$
be given by,
 $$M_{\alpha, \tau}(x) \,  = \, \sum_{i=1}^k \mathbbm{1}_{\{{x_i = x, \tau^{x_i,n_i} \neq \tau_{x_i\rho}}\}},$$
the number of jump instructions of $\alpha$.
Let $K$ be a finite subset of $V$. 
A configuration $\eta$ is said to be \textit{stable} in $K$
if all the vertices $x \in K$ are stable. 
We say that $\alpha$ \textit{is contained in $K$}
if $x_i \in K$ for any $i \in \{1, \ldots, k\}$. 
We say that $\alpha$ \textit{stabilizes} $\eta$ in $K$
if every $x \in K$ is stable in $\Phi_{\alpha} \eta$.


\subsection{{Odometers and Abelian property}}
For any subset $K \subset V$, any $x\in V$, any particle configuration $\eta$, and any array of instructions $\tau$, we define 
$$
m_{K,\eta,\tau}(x) : = \sup_{ \alpha \subset K} m_\alpha,
 \quad \quad 
M_{K, \eta,\tau}(x) := \sup_{ \alpha \subset K} M_{\alpha, \tau}, 
$$
where the sup is taken over the legal sequences 
of instructions which are contained in $K$. 
We refer to $m_{K,\eta,\tau}$ as the \textit{odometer function,} or simply \textit{odometer},
and to  $M_{K, \eta,\tau}$ as \textit{jump odometer.}
The following lemma gives a fundamental property of the Diaconis-Fulton representation. 
For the proof we refer to \cite{RollaNotes}.

\begin{lemma}[Abelian Property]\label{lemma:Abelian}
  Let $(\eta, \tau) \in \mathcal{H} \times \mathcal{I}$, 
  fix any finite set $K \subset V$.
If  
  $\alpha$ and $\beta$ are both legal sequences for $\eta$
  that are contained in $K$ and stabilize $\eta$ in $K$,  
  then $m_{K, \eta, \tau} = m_{\alpha} = m_{\beta}$, and 
$M_{K, \eta, \tau} = M_{\alpha} = M_{\beta}$.  
  In particular, $\Phi_{\alpha} \eta = \Phi_{\beta} \eta$.
\end{lemma}

\subsection{Counters}
\label{sec:counters}
It will be useful to identify the jump or sleep instructions placed at specific locations of the array.
For this reason we introduce some very useful variables, which depend on the instruction array and on some indices.
Let $\tau \in \mathcal{I}$ be an array of instructions,  fix a vertex $x \in V$ and an integer  $m \in \mathbb{N}$. 
We let $J^{x,m}_{\tau}$ be the \textit{m-th jump instruction of $\tau$ at $x$}
and $t^{x,m}_{\tau}$ be its corresponding index.
More precisely, we set $t_{\tau}^{x, 0} : = -1$,  and, for any $m \in \mathbb{N}$, we define
\begin{equation}\label{eq:countingjump1}
t^{ x, m}_{\tau}: = \min \{n > t_{\tau}^{x, m-1} \, \, : \, \,  \tau^{x,n} \not= \tau_{x,\rho} \}
\quad \quad \quad 
J^{x,m}_{\tau}:= \tau^{x,t^{x,m}_{\tau}}.  \quad \quad \quad 
\end{equation}
Moreover, for any $m \in \mathbb{N}$ we let $S^{x,m}_{\tau}$ be \textit{the number of sleep instructions of $\tau$ at $x$  between the 
 $m-1$ th  and the $m$ th jump instruction,}
\begin{equation}\label{eq:countingjump2}
S^{x,m}_{\tau} : = t^{x, m}_\tau -    t^{x, m-1}_\tau - 1.
\end{equation}
For example, the variable $S^{x,1}_{\tau}$ represents the number of sleep instructions which are located before the first jump instruction at $x$,
and this variable equals zero if the first instruction at $x$ is a jump instruction.
See Figure \ref{Fig:array} for an example.
\begin{figure}
	\centering
  \includegraphics[scale=0.70]{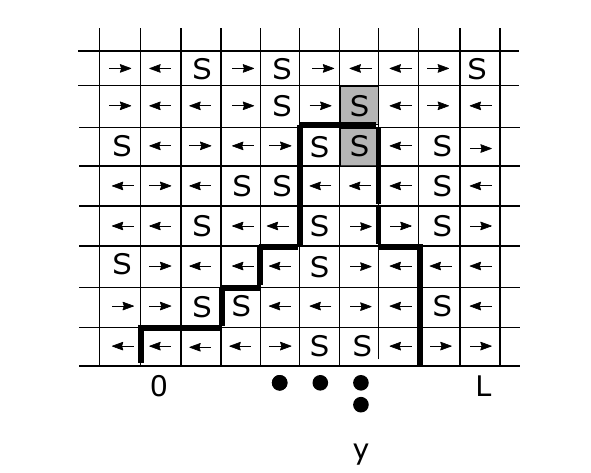}
	\caption{An array  $\tau \in \mathcal{I}$ with $S_\tau^{y,1} = 1$,  $S_\tau^{y,2} = 0$,
$t_\tau^{y,1} = 1$ and $t_\tau^{y,3} = 3$.  
In the figure we assume that the instructions below the bold profile are those which have been used for the stabilisation of $\eta \in \mathcal{H}_a$ in $[0,L]$,
where the particles of $\eta$ correspond to the black circles. 
The array of instructions $\Gamma_-^{y, n}(\tau)$, with $n=M_{K,\eta, \tau}(y)+1 = 5$, is obtained from $\tau$ by `removing' the two dark sleep instructions above the vertex $y$.} 
	  \label{Fig:array}
\end{figure}

\subsection{{Partial orders and monotonicity properties}}
We now introduce a partial order between  particle configurations and arrays of instructions.
Given two particle configurations $\eta, \eta^{\prime} \in \mathcal{H}$, we write $\eta'   \geq   \eta$ if $\eta' (x) \, \geq \, \eta(x)$
for all $x \in V$.
Given two arrays $\tau, \tau^{\prime}$, we write
$
\tau^{\prime} \geq \tau
$
if
$$
\forall x \in V, \quad  \forall m \in \mathbb{N}, \quad J_{\tau}^{x,m} = J_{\tau^{\prime}}^{x,m} \quad 
S_{\tau}^{x,m} \geq  S_{\tau^{\prime}}^{x,m}.
$$
In other words, either $\tau^{\prime} = \tau$  or 
$\tau^{\prime}$ is obtained from $\tau$ by removing some sleep instructions.
The next lemma presents the monotonicity properties of the Diaconis-Fulton representation,  which is a straightforward adaptation of \cite[Lemmas 3 and 5] {Rolla}.

\begin{lemma}[{Monotonicity}]\label{lemma:monotonicity}
   If $K_1 \subset K_2 \subset V$, $\eta \leq \eta'$, $\tau \leq \tau^{\prime}$, then 
   $M_{K_1, \eta, \tau} \leq M_{K_2, \eta', \tau^{\prime}}$
\end{lemma}
By monotonicity, given any growing sequence of subsets $V_1\subseteq V_2 \subseteq V_3\subseteq \cdots \subseteq V$ such that $\lim_{m\to\infty} V_m=V$, 
the limits 
$$
   m_{\eta, \tau}  : = \lim\limits_{m\to \infty} m_{V_m, \eta, \tau}, \quad 
      M_{\eta, \tau}  : = \lim\limits_{m \to \infty} M_{V_m, \eta, \tau},
$$ 
exist and do not depend
on the particular sequence $\{V_m\}_m$.

\vspace{0.25cm}

\subsection{{Probability measure and initial particle distribution}}
We now introduce a probability measure on the space of particle configurations and arrays of instructions. 
The distribution of the initial particle configuration is supported in  $\mathcal{H}_a := \mathbb{N}_0^{V} \subset \mathcal{H}$, it is denoted by $\nu$, 
and is assumed to be a product of Poisson distributions with mean $\mu \in (0, \infty)$.
The parameter $\mu$ then corresponds to the \textit{particle density. }
We also introduce a {probability measure on the set of arrays of instructions}, $\mathcal{I}$.
We denote by $\mathcal{P}_{\lambda}$ the probability measure according to which,
for any $x, y \in V$ and  $j \in \mathbb{N}_0$,
$\mathcal{P}_{\lambda} (  \tau^{x,j} = \tau_{x \rho}  ) = \frac{\lambda}{1 + \lambda}$ and 
$\mathcal{P}_{\lambda} (  \tau^{x,j} = \tau_{xy}   ) = \frac{1}{d_x \, (1 + \lambda)} \mathbbm{1}_{\{ y \sim x \}  }$ independently. 
Finally, we denote by $\mathcal{P}_{\lambda, \mu} =\mathcal{P}_{\lambda}\otimes \nu$ the joint law of
$\eta$ and $\tau$.
We use $\mathbb{P}_{\lambda, \mu}$ to denote the probability measure induced by the ARW process starting from a product of Poisson distributions with parameter $\mu$. 
The following lemma relates the dynamics of ARW to the stability property of the representation.
\begin{lemma}[{Zero-one law, activity and fixation}]
   \label{lemma:01law}
   Let $G =(V,E)$ be an undirected unimodular graph. For every  $x\in V$ we have that,
   \begin{equation}\label{eq:activityandfixation}
   \mathbb{P}_{\lambda, \mu}  (\text{ARW fixates} ) = \mathcal{P}_{\lambda, \mu}^{\nu} ( m_{\eta, \tau} (x) < \infty ) = \mathcal{P}_{\lambda, \mu}^{\nu} ( M_{\eta, \tau} (x) < \infty ) \in \{0, 1\}.
   \end{equation}
\end{lemma}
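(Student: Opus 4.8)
\textbf{Proof plan for Lemma \ref{lemma:01law}.}

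The plan is to follow the strategy of \cite{Rolla} for $\mathbb{Z}^d$ and adapt each step to a general vertex-transitive graph, observing that the only place $\mathbb{Z}^d$ was used is to invoke transitivity and the existence of an exhausting sequence of finite sets. First I would establish the equivalence between the event that the ARW dynamics fixates and the event $\{m_{\eta,\tau}(x) < \infty\}$ in the graphical representation. The key here is the comparison between the dynamics and the representation: one runs the Diaconis--Fulton stabilisation on an increasing sequence $V_1 \subseteq V_2 \subseteq \cdots$ exhausting $V$, and one shows, using the Abelian property (Lemma \ref{lemma:Abelian}) and monotonicity (Lemma \ref{lemma:monotonicityparticle}), that the limiting odometer $m_{\eta,\tau}(x)$ counts the total number of topplings at $x$ in the infinite-volume dynamics. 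This part is essentially the content of the construction in \cite{Rolla2, Rolla} and I would cite it, checking only that no step secretly uses the lattice structure beyond local finiteness.

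Next I would prove the zero-one law, i.e.\ that $\mathcal{P}_{\lambda,\mu}^{\nu}(m_{\eta,\tau}(x) < \infty) \in \{0,1\}$. The natural approach is a translation-invariance/ergodicity argument: the law $\mathcal{P}_{\lambda,\mu}^{\nu}$ is invariant under the transitive group $\Gamma$ of graph automorphisms (since $\nu$ is assumed translation-invariant and the instruction array is i.i.d.), and the event $\{m_{\eta,\tau}(x) < \infty\}$ is, up to the choice of vertex, a $\Gamma$-invariant event — indeed by monotonicity and the Abelian property, $m_{\eta,\tau}(x) < \infty$ for one vertex $x$ if and only if it holds for all vertices, so the event does not actually depend on $x$. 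Since the product measure on the instruction array is mixing under any infinite-order subgroup of $\Gamma$, and $\nu$ can be taken ergodic (for Poisson product measures this is automatic), the $\Gamma$-invariant event has probability $0$ or $1$. The statement that the value is the same for every $x$ also gives the first equality with the ARW-fixation probability, since fixation is exactly the statement that every finite set, hence every vertex, is toppled finitely often.

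Finally I would prove the equality $\mathcal{P}_{\lambda,\mu}^{\nu}(m_{\eta,\tau}(x)<\infty) = \mathcal{P}_{\lambda,\mu}^{\nu}(M_{\eta,\tau}(x)<\infty)$. One inclusion is immediate from the definition \eqref{eq:jump-odometer}: if $m_{\eta,\tau}(x) < \infty$ then $M_{\eta,\tau}(x) \le m_{\eta,\tau}(x) < \infty$. For the converse I would argue that on the event $\{m_{\eta,\tau}(x) = \infty\}$, infinitely many instructions at $x$ are used, and since between any two consecutive jump instructions there are only finitely many sleep instructions ($S^{x,m}_\tau < \infty$ almost surely, as each is a geometric random variable), infinitely many of the used instructions must be jump instructions, so $M_{\eta,\tau}(x) = \infty$; equivalently, almost surely the density of sleep instructions among the first $n$ instructions at $x$ is bounded away from $1$, so $m_{\eta,\tau}(x) = \infty \iff M_{\eta,\tau}(x) = \infty$. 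The main obstacle I anticipate is the first step — making precise that the graphical-representation odometer genuinely computes the fixation behaviour of the time-dependent particle system on a general vertex-transitive graph — but since \cite{Rolla2} already constructs the ARW process on such graphs and \cite{Rolla} proves the correspondence on $\mathbb{Z}^d$ using only transitivity, I expect this to reduce to a careful citation rather than new work; the genuinely probabilistic content is the ergodicity input in the second step.
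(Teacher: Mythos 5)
The paper does not give a proof of this lemma: it simply states that the result was proved in \cite{Rolla} for $G=\mathbb{Z}^d$ and extends to vertex-transitive graphs. Your plan is a correct and faithful fleshing-out of what that citation entails --- the Diaconis--Fulton correspondence via the Abelian property, the ergodicity argument for the zero-one law using that $\{m(x)<\infty\}$ does not actually depend on $x$, and the elementary observation that $m(x)=\infty$ forces $M(x)=\infty$ almost surely because each $S^{x,m}_\tau$ is geometric --- so you are following essentially the same route the paper intends, just writing out explicitly the steps it leaves implicit.
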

The lemma was proved  in \cite{Rolla}
in case $G = \mathbb{Z}^d$.
The proof remains unchanged if the graph is vertex transitive
and the initial particle distribution is the product of identical 
distributions.  
Indeed the zero-one law,
$
 \mathcal{P}_{\lambda, \mu}( m_{\eta, \tau} (o) < \infty ) \in \{0,1\},
$
follows from the fact that  the event $\{m_{\eta, \tau} (o) < \infty \}$
is a.s. automorphism invariant and, since  the process is determined by i.i.d. variables at the vertices (initial configuration, sleep and jump instructions), it is then ergodic.
The proof of the other claims is the same as in  \cite{Rolla}.

\section{Relevant events and properties of the jump odometer}
\label{sect:relations}
The goal of this section is to introduce 
the definition of increasing and relevant events and discuss some properties of the jump odometer. 
Recall that $\mathcal{H}$ denotes the set of particle configurations and that $\mathcal{I}$ denotes the set of arrays of instructions.
Let $\mathcal{S}$ be the smallest sigma-algebra generated by all the open subsets of $\mathcal{H} \times \mathcal{I}$ with respect to the natural product topology.
\begin{definition}\label{def:increasing}
We say that an event $\mathcal{A} \in \mathcal{S}$ is  \textit{{increasing}} if
\begin{equation*}\label{eq:increasing}
(\eta, \tau) \in \mathcal{A},  \, \, \, \, \tilde \eta \geq \eta,  \, \, \, \, \tilde \tau \geq \tau \implies \, \, (\tilde \eta,  \tilde \tau) \in \mathcal{A}.
\end{equation*}
\end{definition}

\begin{definition}
\label{def:relevant}
An event  $\mathcal{A} \in \mathcal{S}$  is said to be \textit{relevant} if it can be written as,
\begin{equation}\label{eq:relevantevent}
\mathcal{A} = \{\forall x \in K  \,  \,  M_{K, \eta, \tau}(x) \geq H(x)\}.
\end{equation}
for some finite $K \subset V$ and for some function $H \in  \mathbb{N}_0^K$.
We refer to the set $K$ as  \textit{domain} of $\mathcal{A}$.
\end{definition}
For example, the event 
$\{M_{K, \eta, \tau}(o) \geq L  \}$,
for $L \in \mathbb{N}$ and $o \in K \subset V$, is relevant 
and has domain $K$,
since it can  be written as 
$\{M_{K, \eta, \tau}(x) \geq H(x) \, \, \, \forall x \in K  \}$
for the function  $H \in \mathbb{N}_0^K$ which is such that  $H (x) = L \,   \delta_o(x)$ for any $x \in K$. 
 Note that,  by Lemma \ref{lemma:monotonicity},
 any relevant event is increasing.

We now discuss some properties of the jump odometer and of the relevant events. 
For any arbitrary pair $(y,m) \in V \times \mathbb{N}$ 
we introduce  two operators, $\Gamma^{y,m}_-, \Gamma^{y,m}_1 : \mathcal{I} \mapsto \mathcal{I}$ acting on the instruction array as follows. 
For an arbitrary array $\tau \in \mathcal{I}$,  we let $\Gamma^{y,m}_-(\tau) \in \mathcal{I}$ be the new  array of instructions which is obtained from $\tau$ by \textit{removing all the sleep instruction between the $m-1$th and the $m$th jump instruction at $y$.}
More precisely,  $\Gamma^{y,m}_-(\tau)$ is defined as the unique array such that 
for any $x \in V$ and $k \in \mathbb{N}$, 
$$ 
J_{\Gamma^{y,m}_-(\tau)} ^{x,k}   = J_\tau^{x,k} 
\quad
\mbox{ and } 
\quad 
S^{x,k}_{\Gamma^{y,m}_-(\tau)} 
 =  \begin{cases}
0 &\mbox{ if  $x =  y$ }  \mbox{ and {  $k=m$ } }  \\
S^{x,k}_{\tau}  &\mbox{ otherwise},
\end{cases}  
$$
where we recall that the variables $J^{x,m}_\tau$ and  $S^{x,m}_{\tau}$ 
were defined in Section \ref{sec:counters}.
See Figure \ref{Fig:array} for an example.
Moreover, 
we define a new instruction array $\Gamma^{y,m}_1(\tau) \in \mathcal{I}$, which is  is obtained from $\tau$ by \textit{setting to one the number of sleep instructions between the $m-1$th and the $m$th jump instruction at $y$}.
More precisely, 
$\Gamma^{y,m}_1(\tau)$
 is defined as the unique array such that 
for any $x \in V$ and $k \in \mathbb{N}$, 
$$
J_{\Gamma^{y,m}_1(\tau)} ^{x,k}  = J_\tau^{x,k} 
\quad
\mbox{ and } 
\quad 
S^{x,k}_{\Gamma^{y,m}_1(\tau)}   =  \begin{cases}
1 &\mbox{ if  $x =  y$ }  \mbox{ and {  $k=m$ } }  \\
S^{x,k}_{\tau}  &\mbox{ otherwise}.
\end{cases}  
$$

We now discuss some properties of the jump odometer. 
To begin,   introduce the set 
$
\mathcal{W}
$
of pairs $(\eta, \tau) \in \mathcal{H} \times \mathcal{I}$
such that, for any finite $K \subset V$ and any 
$x \in K$, $m_{K, \eta, \tau}(x) < \infty$. 
Any $(\eta, \tau) \in \mathcal{W}$ is such that 
the stabilisation of any finite set of sites uses a  finite number of instructions and, clearly,  $\mathcal{P}_{\lambda, \mu}(\mathcal{W}) = 1$.
The first simple lemma states that
only the  removal of the sleep instructions which have been used at last at sites during the stabilisation might affect the jump odometer. 

\begin{lemma}\label{prop:essentiality}
\label{lemma:essentiality}
Consider a pair $(\eta, \tau) \in \mathcal{W}$,
let  $K \subset V$ be a finite set, fix an arbitrary vertex $y \in V$. 
For any $n \in \mathbb{N}$ such that  $n \neq M_{K, \eta, \tau}(y)+1$, we have that $$M_{K, \eta, \tau}  =  M_{K, \eta, \Gamma_-^{y, n}(\tau)}.$$
\end{lemma}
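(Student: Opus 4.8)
The plan is to argue that removing all sleep instructions located between the $(n-1)$-th and $n$-th jump instruction at $y$ cannot change the jump-odometer, \emph{provided} $n \neq M_{K,\eta,\tau}(y)$, by running a carefully chosen legal stabilising sequence simultaneously in $\tau$ and in $\Gamma_-^{y,n}(\tau)$. The key observation is that, during the stabilisation of $K$ from $\eta$ using $\tau$, the $n$-th jump instruction at $y$ is used if and only if $M_{K,\eta,\tau}(y) \geq n+1$, equivalently $M_{K,\eta,\tau}(y) > n$. Thus the hypothesis $n \neq M_{K,\eta,\tau}(y)$ splits into two cases: either $n > M_{K,\eta,\tau}(y)$, in which case the block of sleep instructions being deleted sits \emph{above} (at higher index than) all instructions ever used at $y$ during the stabilisation; or $n < M_{K,\eta,\tau}(y)$, in which case both the $(n-1)$-th and the $n$-th jump instruction at $y$ are actually used, so the deleted sleep block is sandwiched strictly between two \emph{used} jump instructions.

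First I would fix a legal sequence $\alpha$ that stabilises $\eta$ in $K$ using $\tau$; by Lemma~\ref{lemma:Abelian} the odometer $m_{K,\eta,\tau}$, and hence $M_{K,\eta,\tau}$, does not depend on this choice. In the first case ($n > M_{K,\eta,\tau}(y)$) I would observe that $\alpha$ is \emph{also} a legal sequence for $\eta$ with the array $\Gamma_-^{y,n}(\tau)$: indeed, the two arrays agree at every vertex $x \neq y$ and agree at $y$ for all indices $k \leq t_\tau^{y,n-1}$ by \eqref{eq:gammatau}, and $\alpha$ only ever uses instructions at $y$ with index at most $t_\tau^{y, M_{K,\eta,\tau}(y)-1} < t_\tau^{y, n-1}$ since at most $M_{K,\eta,\tau}(y)$ jump instructions and the sleep instructions preceding them are consumed. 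Since $\alpha$ stabilises $\eta$ in $K$ for $\Gamma_-^{y,n}(\tau)$ as well, the Abelian property gives $m_{K,\eta,\Gamma_-^{y,n}(\tau)} = m_\alpha = m_{K,\eta,\tau}$; and since the jump instructions at the relevant low indices coincide in the two arrays, $M_{K,\eta,\Gamma_-^{y,n}(\tau)} = M_{K,\eta,\tau}$.

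In the second case ($n < M_{K,\eta,\tau}(y)$) the deleted sleep block lies strictly between two instructions that \emph{are} used, so I cannot reuse $\alpha$ verbatim; instead I would transport $\alpha$ through the index shift encoded in \eqref{eq:gammatau}. Concretely, define $\alpha'$ from $\alpha$ by replacing each pair $(y, k)$ with $k > t_\tau^{y,n-1}$ by $(y, k - S_\tau^{y,n})$, and leaving all other pairs unchanged; the reindexing map is a strictly increasing bijection between the jump-instruction indices of $\tau$ at $y$ above $t_\tau^{y,n-1}$ and those of $\Gamma_-^{y,n}(\tau)$ at $y$ above $t_{\Gamma_-^{y,n}(\tau)}^{y,n-1}$, and it carries jump instructions to the identical jump instructions and sleep instructions to sleep instructions. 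One then checks that $\alpha'$ is legal for $\eta$ with $\Gamma_-^{y,n}(\tau)$: condition (i) of legality (consecutive uses at the same vertex have consecutive indices) is preserved because we deleted \emph{only} a consumed-or-skipped block of sleeps and the counter $S_\tau^{y,n}$ shifts all later $y$-indices uniformly, and because $S_\tau^{y,n}$ sleep instructions of $\tau$ immediately after $t_\tau^{y,n-1}$ have no effect on the configuration at $y$ once $y$ is unstable — they are ``skipped'' in both arrays by the same topplings, using the fact that $(\eta,\tau) \in \mathcal{W}$ so only finitely many instructions matter; condition (ii) (each used instruction is legal, i.e.\ acts on an unstable vertex) is preserved because $\alpha'$ visits the same vertices in the same order with the same multiplicities, hence produces the same intermediate configurations at the moments of the jump instructions. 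Since $\alpha'$ also stabilises $\eta$ in $K$ and visits each vertex exactly as often as $\alpha$ does, Lemma~\ref{lemma:Abelian} yields $m_{K,\eta,\Gamma_-^{y,n}(\tau)} = m_{\alpha'} = m_\alpha = m_{K,\eta,\tau}$, and comparing the jump counts through the reindexing bijection gives $M_{K,\eta,\Gamma_-^{y,n}(\tau)} = M_{K,\eta,\tau}$.

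The main obstacle I expect is the bookkeeping in the second case: one must verify rigorously that deleting a block of sleep instructions that are ``passed over'' during the stabilisation (because $y$ is already unstable, or because they precede a jump that is used anyway) genuinely does not alter which instruction is legal at each step — in other words, that a run of sleep instructions between two \emph{used} jumps at $y$ is inert for the odometer. The cleanest way to handle this is to note that a sleep instruction $\boldsymbol{s}$ acts nontrivially only when the vertex holds exactly one particle, and to argue that between the $(n-1)$-th and $n$-th \emph{used} jumps at $y$ the vertex $y$ never drops to a single particle without the next instruction being a jump — or, more robustly, to avoid step-by-step tracking entirely and instead prove the identity $M_{K,\eta,\tau} = M_{K,\eta,\Gamma_1^{y,n}(\tau)}$ first (one sleep is harmless when it sits between two used jumps at an unstable vertex), then iterate, using Lemma~\ref{lemma:monotonicitylambda} to sandwich $\Gamma_-^{y,n}(\tau) \leq \Gamma_1^{y,n}(\tau) \leq \tau$ and conclude the odometers are squeezed equal. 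Whichever route is taken, the Abelian property is what ultimately frees us from caring about the precise order of operations.
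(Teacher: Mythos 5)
Your first case is correct and matches the paper's argument exactly, so no issue there. In the second case there is a genuine gap. Your plan of transporting a fixed stabilising sequence $\alpha$ through the index shift of \eqref{eq:gammatau} does not produce a well-defined legal sequence for $\Gamma_-^{y,n}(\tau)$: legality condition (i) forces $\alpha$ to use \emph{consecutive} indices at each vertex, so if the $n$-th jump at $y$ appears in $\alpha$ then the $u := S_\tau^{y,n}$ sleep instructions at indices $t_\tau^{y,n-1}+1,\dots,t_\tau^{y,n-1}+u$ are all in $\alpha$ as well; shifting these by $-u$ lands on indices $\leq t_\tau^{y,n-1}$ that are already occupied, and simply dropping them from $\alpha$ changes the configuration trajectory unless those sleeps were inert along $\alpha$ --- which is exactly what has to be proved, not assumed. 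The supporting claim you invoke, that between the $(n-1)$-th and $n$-th used jumps at $y$ the vertex never drops to a single particle, is false for a generic legal stabilising sequence: $y$ can fall asleep, be reawakened by an incoming particle, and topple again, so a sleep in that block is not inert along that particular $\alpha$. Your fallback sandwich argument also does not close: in the paper's partial order removing sleep instructions moves \emph{up}, so $\tau \leq \Gamma_1^{y,n}(\tau) \leq \Gamma_-^{y,n}(\tau)$, and knowing $M_{K,\eta,\tau} = M_{K,\eta,\Gamma_1^{y,n}(\tau)}$ together with Lemma~\ref{lemma:monotonicitylambda} only gives the one-sided inequality $M_{K,\eta,\tau} \leq M_{K,\eta,\Gamma_-^{y,n}(\tau)}$, not equality; and proving the $\Gamma_1$ identity is not obviously easier than the target.

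The missing idea is to \emph{choose} the stabilising sequence so that the inertness becomes provable, via the stabilisation of a domain from Section~2.1. Set $Z(y) = t_\tau^{y,n-1}+1$ and $Z(x)=\infty$ for $x \in K\setminus\{y\}$, and stabilise $\eta$ in $(K,Z)$: this pauses the procedure exactly before the block of $u$ sleeps at $y$. Using $n < M_{K,\eta,\tau}(y)$ one then shows by contradiction that the resulting configuration $\eta'$ satisfies $\eta'(y) \geq 2$ (if $\eta'(y) \in \{0,\rho\}$ the stabilisation of $K$ would already be complete with only $n$ jumps at $y$; if $\eta'(y)=1$ the next sleep would finish it, again with $n$ jumps at $y$ --- both contradict $n < M_{K,\eta,\tau}(y)$). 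With $\eta'(y)\geq 2$ established, the $u$ sleep instructions are provably inert, so one may append them to the sequence and then complete the stabilisation in $K$ in any order; those $u$ inert sleeps can then be deleted from both the sequence and the array without affecting the configuration at any step, yielding a legal stabilising sequence for $\Gamma_-^{y,n}(\tau)$ with the same jump counts at every vertex, and the Abelian property concludes. Your proposal repeatedly gestures at the inertness of the sleeps but never supplies the lower bound $\eta'(y)\geq 2$, which is the step that actually makes the inertness true.
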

\begin{proof}
Fix $(\eta, \tau) \in \mathcal{W}$, $y \in V$ and  $n \in \mathbb{N}$. 
The claim is obvious if $n > M_{K, \eta, \tau}(y) + 1 $,  if $y \not\in  K$ 
or if $S_\tau^{y, n} = 0$,
hence we can assume that $n \leq  M_{K, \eta, \tau}(y)$,  $y \in K$ 
and that $S_\tau^{y, n} > 0$.
We set for brevity $\tau^\prime = \Gamma_-^{y, n}(\tau)$.
We first perform a stabilisation of $K$ under the constraint that no instruction after the $n-1$th jump instruction at $y$ is used. 
More precisely,  we perform a legal sequence of topplings as follows,  namely 
we use 
 any  instruction at $x \in K$ with $x \neq y$
as long as $x$ is unstable, 
and we use any  instruction  at 
 $y$ 
as long as $y$ is unstable \textit{and} 
no   instruction after the 
$n-1$th jump instruction at $y$ is used. 
We iterate this procedure until no further instruction can be,
unless violating such constraints. 
When we have done, we obtain a particle configuration $\eta^\prime$ which is such that,
\begin{equation}\label{eq:desired200}
\eta^{\prime}(x) 
\begin{cases}
\geq 2 & \mbox{ if $x = y$,}  \\
 \in \{0, \rho\}  & \mbox{ if $x \neq  y$}.
\end{cases} 
\end{equation}
The fact that $\eta^\prime(x) \in \{0, \rho\}$ for any $x \neq y$
follows from the definition of our toppling procedure. 
The fact that $\eta^\prime(y) \geq 2$ will be now proved by contradiction.
Indeed, suppose that  this was not true, namely that either {(a)}
$\eta^{\prime}(y) \in \{0,\rho\}$
or {(b)} $\eta^{\prime}(y) = 1$.
If (a) was true, then  we would have stabilized $\eta$ in $K$ using strictly less than $M_{K, \eta, \tau}(y)$ jump instructions at $y$, contradicting our assumption.
Similarly, if (b) was true, then by using one more instruction at $y$ for $\eta$ (which is  a sleep instruction  by assumption) we would have stabilized $\eta$ in $K$ using  strictly less than $M_{K, \eta, \tau}(y)$ jump instructions, contradicting again our assumptions.
This leads to (\ref{eq:desired200}) as desired.
We now complete the stabilisation of $\eta$ in $K$ starting from the particle configuration $\eta^\prime$ and following an arbitrary stabilisation procedure
and denote by $\alpha$ the sequence of instructions which were used for the stabilisation of the initial particle configuration.
Since $\eta^\prime(y) \geq 2$, the next $S_\tau^{y,n}$  instructions at $y$, which are of type sleep by assumption, do not affect  the particle configuration.  
Hence, they  can be removed  from the sequence $\alpha$ and from  the array $\tau$ without  the jump odometer being affected from this removal. This  leads to the new 
array $\tau^{\prime}$ and to a new legal sequence  
 of instructions of $\tau^{\prime}$, $\alpha^{\prime}$, which stabilises $\eta$ in $K$
 and satisfies $M_{\alpha, \tau   } = M_{\alpha^\prime, \tau^\prime}$.
  By the Abelian property, the proof is concluded.
\end{proof}

The next simple lemma states that the jump odometer does not depend on the precise number of 
sleep instructions between any two consecutive jump instructions, but only on whether such a number is zero or strictly positive. 
\begin{lemma}\label{lemma:numberofsleep}
Consider a pair $(\eta, \tau) \in \mathcal{W}$,
let  $K \subset V$ be a finite set, fix  an arbitrary vertex $y \in V$.
For any integer $n \in \mathbb{N}$ such that $S_\tau^{y,n}>0$ we have,
 $$M_{K, \eta, \tau}  =  M_{K, \eta, \Gamma_1^{y, n}(\tau)}.$$
\end{lemma}
\begin{proof}
The proof of the claim is similar to the one of Lemma \ref{prop:essentiality}.
We assume that $S_\tau^{y,n}>1$, the claim is trivial otherwise. 
If $n > M_{K, \eta, \tau}(y)+1$ or $y \not\in K$,  then the proof is trivial.
If  $n < M_{K, \eta, \tau}(y)+1$, 
then the proof is analogous to the proof of Lemma \ref{prop:essentiality}.
Suppose then that $n = M_{K, \eta, \tau}(y)+1$ (a glance at  Figure \ref{Fig:array} may help).
We first perform a stabilisation of $K$ under the constraint that no  instruction after the $n-1$th jump instruction at $y$ is used,  as defined in the proof of Lemma \ref{prop:essentiality}.
We call $\eta^\prime$ the particle configuration we obtain. 
We claim that,
 \begin{equation}\label{eq:desired300}
\eta^{\prime}(x) 
\begin{cases}
\in \{0, 1\} & \mbox{ if $x = y$,}  \\
 \in \{0, \rho\}  & \mbox{ if $x \neq  y$}.
\end{cases} 
\end{equation}
 The fact that $\eta^\prime(x) \in \{0, \rho\}$ for any $x \neq y$
follows from the definition of our toppling procedure. 
The fact that $\eta^\prime(y) \in \{0,  1\}$ will be now proved by contradiction.
Indeed,  if $\eta^\prime(y) > 1$,  then  in order to conclude the stabilisation of $K$
it would be necessary to use at least one additional jump instruction at $x$,  and this would imply that  more than $M_{K, \eta,  \tau} (y)$ jump instructions are used at $y$ for the stabilisation of $K$, thus contradicting our assumptions. 
Moreover, if $\eta^\prime(y) = \rho$, 
then this would mean that the last instruction which was used at $y$ was a sleep instruction and, since this sleep instruction must be located before the  $n-1$th jump instruction at $y$ (unless violating the definition of our toppling procedure), this implies that 
the  initial particle configuration has been stabilised  using strictly less than  
$M_{K, \eta, \tau}(y)$ jump instruction at $y$,  leading again to a contradiction.  This leads to (\ref{eq:desired300}).
We now complete the stabilisation.
In case  $\eta^\prime(y) = 1$,  the use of the next instruction at $y$, which is sleep by assumption,  stabilises the particle configuration $\eta^\prime$ in $K$. Hence,   the removal of the next $S_{\tau}^{y,n}-1 >0$ sleep instructions at $y$ does not affect the jump odometer, since these instructions are not used for the stabilisation.
Similarly, in case   $\eta^\prime(y) = 0$,  we conclude that  $\eta^\prime$  is  already stable and that  no sleep instruction between the $n-1$th and the $n$th jump instruction at $y$ was used for the stabilisation of $\eta$ in $K$. Hence,  by the Abelian property,  also in this case the removal of the next $S_{\tau}^{y,n}-1$  sleep instructions at $y$  from the array does not affect the jump odometer.
This concludes  the proof.
\end{proof}

The next lemma is an immediate application of the previous one and states that any relevant event does not depend on the precise number of sleep instructions which are located between two consecutive jump instructions,  but only on whether this number is zero or strictly positive. 
The lemma also states the obvious fact that the relevant event (\ref{eq:relevantevent}) does not depend on the instructions outside its domain and on the instructions at sites in the domain which are not used for the stabilisation of the domain.  
\begin{lemma}\label{lemma:propertyessentialevents}
Consider an arbitrary finite set $K \subset V$, let $\mathcal{A}$ be any relevant event with domain $K$. 
 For every 
 $(\eta_1, \tau_1), (\eta_2, \tau_2) \in \mathcal{W}$
satisfying
 $\eta_1(x) = \eta_2(x)$
 and 
 $$
 \forall j \in \{ 1, \ldots M_{K, \eta_1, \tau_1}(x) + 1 \}
 \quad S^{x,j}_{\tau_1}  > 0 \iff S^{x,j}_{\tau_2} > 0     \quad  \mbox{ and } \quad 
 J_{\tau_1}^{x,j} = J^{x,j}_{\tau_2},
 $$
  for every $x \in K$,
we have that 
 \begin{equation}
\label{eq:desiredrelevant}
 M_{K, \eta_2, \tau_2} =  M_{K, \eta_1, \tau_1}.
 \end{equation}
This in turn implies that, 
 \begin{equation}\label{eq:desiredrelevant2}
(\eta_1, \tau_1) \in \mathcal{A} \iff  (\eta_2, \tau_2) \in \mathcal{A}.
 \end{equation}
\end{lemma}
\begin{proof}
Let $(\eta_1, \tau_1)$, $(\eta_2, \tau_2)$ be as in the assumptions of the lemma.
Let $U $ be the set of pairs $(x, n) \in K \times \mathbb{N}$ such that $ 1 \leq n \leq  M_{K, \eta_1, \tau_1}(x)+1 $
and $S_{\tau_1}^{x, n} > 0.$
Let $(x_1, n_1), \ldots (x_k, n_k)$ be a sequence of elements in $U$ such that each element of $U$ appears one time in  the sequence. 
Define the arrays $\tau^\prime_1 :=   \Gamma_{1}^{x_1, n_1   }   \Gamma_{1}^{x_2, n_2   } \ldots  \Gamma_{1}^{x_k, n_k} \tau_1$
and $\tau^\prime_2 :=   \Gamma_{1}^{x_1, n_1   }   \Gamma_{1}^{x_2, n_2   } \ldots  \Gamma_{1}^{x_k, n_k} \tau_2$.
By
Lemma \ref{lemma:numberofsleep}
we deduce that
$M_{K, \eta_1, \tau_1} = M_{K, \eta_1, \tau_1^\prime}$
and that 
$M_{K, \eta_2, \tau_2} = M_{K, \eta_2, \tau_2^\prime}$.
Moreover, note that if $(y,j)$ belongs to a sequence  
of instructions of $\tau^\prime_1$ which stabilises $\eta_1$ in $K$, 
by our assumptions on $\tau_1$, $\tau_2$
and by the fact that  $j \leq M_{K, \eta_1, \tau_1}(y)+1$
we then have that 
${\tau^\prime_1}^{y,j} = {\tau^\prime_2}^{y,j}$.
Hence, by the Abelian property and by the fact that $\eta_1$ and $\eta_2$ are identical in $K$ we have that 
$M_{K, \eta_1, \tau_1^\prime} = 
M_{K, \eta_2, \tau^\prime_2}$. 
Combining the identities we derived so far we obtain   (\ref{eq:desiredrelevant}),  as desired.   
The claim
 (\ref{eq:desiredrelevant2}) now  follows immediately from the definition of relevant event.
\end{proof}

\section{Partial derivatives}
\label{sect:Russo}
In this section we introduce the notion of particle and sleeping essential pairs 
and present our formula for the partial derivatives of the probability of  relevant events. 
For an arbitrary particle configuration $\eta \in \mathcal{H}$,
vertex
 $x \in V$ and integer $k \in \mathbb{N}_0$, we denote by
$\eta^{(x,k)} \in \mathcal{H}$ the particle configuration which is obtained from $\eta$ by setting $k$ (active) particles at $x$,  i.e, 
$$
\eta^{(x,k)}(y) := 
\begin{cases}
k  &\mbox{ if } y = x, \\
\eta(y) & \mbox{ if } y \neq x.
\end{cases}
$$
Let now $\mathcal{A} \in \mathcal{S}$ be an arbitrary event.

\begin{definition}
For every vertex $x \in V$ and integer $k \in \mathbb{N}_0$, 
we define the event $\{$the pair $(x,k)$ is \textit{particle-essential} for the event $\mathcal{A} \}$ as the set of realisations  $(\eta, \tau) \in \mathcal{H} \times \mathcal{I}$ such that,
$$
( \eta^{(x,k)}, \tau) \not\in \mathcal{A} \, \, \mbox{ and } \, \,  ( \eta^{(x,k+1)}, \tau) \in \mathcal{A}.
$$
\end{definition}
Sometimes, we will write p-essential in place of particle-essential.
\begin{definition}
For every vertex $x \in V$ and integer $k \in \mathbb{N}$, 
we define the event 
$\{$the pair $(x,  k)$ is \textit{sleeping-essential for $\mathcal{A}$} $\}$
 as the set of realisations $(\eta, \tau) \in \mathcal{H} \times \mathcal{I}$ such that 
$$
( \eta, \Gamma_1^{x,k}(\tau)) \notin  \mathcal{A} \, \, \mbox{ and } \, \,  \big  ( \eta, \Gamma^{x,k}_-(\tau) \big  ) \, \in \mathcal{A}.
$$
\end{definition}
Sometimes, we will write s-essential in place of sleeping-essential.

\begin{remark}\label{remark:dependences}
It follows from the definition of particle-essential  and  sleeping-essential pairs that, given an arbitrary event $\mathcal{A} \in \mathcal{S}$, 
the event $\{ (y,k)$ is a particle-essential pair  for $\mathcal{A}\}$ is independent from   
the initial number of particles at $y$,$\eta(y)$, and that the event
$\{(y,k)$ is a sleeping-essential pair  for $\mathcal{A}\}$ is independent from the variable $S_{\tau}^{y,k}$.
\end{remark}

We now present the main theorem of this section.
 In the statement of the theorem, we consider the probability of an event $\mathcal{A} \in \mathcal{S}$, $\mathcal{P}_{\lambda, \mu}( \mathcal{A})$, as a function of $(\lambda, \mu)$ in  $\mathbb{R}^2_0$.
For every $j \in \mathbb{N}_0$, we  denote by $\nu_j= \nu_j(\mu) = e^{- \mu}  \mu^j / j!$ the probability that a single vertex hosts $j$ particles,  we define  $\nu_{ > j} = \nu_{ > j}(\mu) : = \sum_{\ell > j}  \nu_\ell$, the probability that a vertex hosts more than $j$ particles and let $\nu^\prime _{>j} := \frac{d}{d \mu}  \nu_{>j}(\mu)$ be its  derivative with respect to $\mu$, noting that $\nu^\prime _{>j} = \nu_j$.
\begin{theorem}\label{theo:Russo}
Let $\mathcal{A}$ be any relevant event. 
The  function $\mathcal{P}_{\lambda, \mu}( \mathcal{A})$ is differentiable in $\mathbb{R}_+^2$ and,  for every 
 $( \lambda^\prime, \mu^\prime )  \in \mathbb{R}_+^2$, we have that,
\begin{align}
\label{eq:partial1}
 \frac{\partial}{\partial \lambda } \, \mathcal{P}_{\lambda, \mu} \big (  \mathcal{A} \big )
\Big \vert_{\lambda^{\prime}, \mu^{\prime}}  = &  - 	(\frac{1}{1+\lambda'})^2 \,   \sum\limits_{y \in V, j \in \mathbb{N}} \mathcal{P}_{\lambda', \mu^{\prime}} \big (  (y,j) \, \mbox{ is sleeping-essential for } \mathcal{A} \big ),   \\
\label{eq:partial2}
 \frac{\partial}{\partial \mu }\, \mathcal{P}_{\lambda, \mu} \big (  \mathcal{A} \big )\Big \vert_{\lambda^{\prime}, \mu'}  = &      \sum\limits_{y \in V, j \in \mathbb{N}_0}  \mathcal{P}_{\lambda^{\prime}, \mu'} \big (  (y,j)\, \mbox{ is particle-essential for } \mathcal{A} \big )  \, \,   \nu'_{ > j}(\mu^{\prime}).
\end{align}
\end{theorem}
The remainder of this section is devoted to  the proof of Theorem \ref{theo:Russo}, which is divided into  three subsections. In Section \ref{sect:coupling} we  introduce a coupling which allows the comparison of ARW-systems with different values of the parameters $\mu$ and $\lambda$. In Sections \ref{sect:proof1}
and \ref{sect:proof2} we will use such a coupling to present the proof of (\ref{eq:partial1}) and (\ref{eq:partial2}) respectively.
From now on, we will write $\frac{\partial}{\partial \lambda } \, \mathcal{P}_{\lambda, \mu} \big (  \mathcal{A} \big )$ in place of $\frac{\partial}{\partial \lambda^\prime } \, \mathcal{P}_{\lambda^\prime, \mu^\prime} \big (  \mathcal{A} \big )  \big \vert_{\lambda, \mu}$, sometimes we will write $\partial_\lambda$ for $\frac{\partial}{\partial \lambda }$, and we will do the same for the partial derivative with respect to $\mu$.

\subsection{Probability space for coupled activated random walk models}
\label{sect:coupling}
We now introduce a new probability space which allows us to couple activated random walk
systems corresponding to different values of $\mu  \geq 0 $ and $\lambda  \geq 0$.
This new probability space will be denoted by $(\Sigma, \mathcal{F}, \boldsymbol{\mathcal{P}})$.
To begin, let $ \big ( X_{x} \big ) _{x \in V}$,  $ \big ( Y_{x, m} \big ) _{x \in V, m \in \mathbb{N}}$,
and $\big ( A_{x,m} \big )_{x \in V, m \in \mathbb{N}}$ be three sequences of independent random variables in $(\Sigma, \mathcal{F}, \boldsymbol{\mathcal{P}})$
which are distributed as follows.
The variables  $ \big ( X_{x} \big ) _{x \in V}$  and $ \big ( Y_{x, m} \big ) _{x \in V, m \in \mathbb{N}}$
have uniform distribution in $[0,1]$, while the variables 
 $\big ( A_{x,m} \big )_{x \in V, m \in \mathbb{N}}$  are such that, for each $x \in V$,  and $m \in \mathbb{N}$, $A_{x,m}$ takes values in $\{   \tau_{xy} \, : \, x \in V, y \sim x \}$,
and has distribution
$$
\boldsymbol{\mathcal{P}} ( A_{x,m}  =   \tau_{x y} ) = \frac{1}{d_x}.
$$ 
The variables $(X_x)_{x \in V}$ will be used to sample the initial particle configurations, the variables $(Y_{x,m})_{x \in V, m \in \mathbb{N}}$  will be used to sample the sleep instructions, and the variables $(A_{x,m})_{x \in V, m \in \mathbb{N}}$  will correspond to the jump instructions.
We start with the construction of the  \textit{{initial particle configuration.}}
 Define the function $\eta_{\mu} : \Sigma \rightarrow  \mathcal{H}$, which depends
 on the parameter  $\mu  \geq 0$.
For every $x \in V$, let $k \in \mathbb{N}_0$ be the unique integer such that $X_x \in  [ \nu_{< k}(\mu), \nu_{< k+1}(\mu) )$,
where $\nu_{<k} (\mu):= 1 - \nu_{> k-1} (\mu)$ and $\nu_{<0}(\mu) := 0$. 
Then, $\eta_{\mu}(x) := k$.
Note that,  it follows by construction that,
\begin{equation}\label{eq:coupling number part}
\forall \mu \geq 0, \quad \forall x \in V, \quad   \boldsymbol{\mathcal{P}} \big (  \eta_{\mu}(x) = k \big )  =  \nu_{k}(\mu),
\end{equation}
and that the variables $\big ( \eta_{\mu}(x) \big )_{x \in V}$
 are independent.
We now construct the \textit{array of instructions.}
For every $m \in \mathbb{N}$ and $x \in V$, we define the functions
$R_{\lambda}^{x, m} : \Sigma \rightarrow \mathbb{N}$,
which represent the number of sleep instructions between the $m-1$-th and the $m$th jump instruction at $x$ and 
depend on the parameter $\lambda \in [0, \infty)$,
$$R_{\lambda}^{x, m} : =
\begin{cases}
\ell\, \, \mbox{ if} \, \, \, Y_{x,m} \in \big ( \, (\frac{\lambda}{1+\lambda})^{\ell+1},  (\frac{\lambda}{1+\lambda})^{\ell} \big  ] \\
0\, \, \mbox{ otherwise. }
\end{cases}
$$
Note that, by construction, 
\begin{equation}\label{eq:coupling number sleep}
\forall \lambda \in \mathbb{R}^+_{0}, \quad \forall x \in V, \quad \forall m \in \mathbb{N}, \quad \forall \ell \in \mathbb{N}_0, \quad  \boldsymbol{\mathcal{P}} \big (  R_{\lambda}^{x, m}  = \ell  \big )  = \frac{1}{1+\lambda} \, \, \big (  \frac{\lambda}{1 + \lambda} \big )^{\ell},
\end{equation}
and that the variables $\big ( R_{\lambda}^{x, m}\big ) _{x \in V, m \in \mathbb{N}}$ are independent.
Moreover, we define the function
$\tau_{\lambda} : \Sigma \rightarrow \mathcal{I}$,
which represents the instruction array for the coupled activated random walk systems as the unique array of instructions such that  
$$
\forall x \in V, \quad 
\forall m \in \mathbb{N,} \quad 
J^{x,m}_{\tau_{\lambda}} : = A_{x, m} \quad 
\mbox{ and }
\quad S^{x,m}_{\tau_{\lambda}}  : = R^{x,m}_{\lambda}.
$$
By construction, we proved the following proposition.
\begin{proposition}\label{prop: coupling}
Let $\lambda \in [0, \infty)$ and $\mu \in [0, \infty)$.  Sample the pair $(\eta, \tau) \in \mathcal{H} \times \mathcal{I}$ according to $\mathcal{P}_{\lambda, \mu}^{\nu}$ and let
$\eta_{\lambda} : \Sigma \rightarrow \mathcal{H}$ and 
$\tau_{\lambda} : \Sigma \rightarrow \mathcal{I}$ 
be the random variables in the probability space $(\Sigma, \mathcal{F}, \boldsymbol{\mathcal{P}})$ which  have been defined above. We have that,
$$
(\eta, \tau)  
\overset{d}{=} (\eta_{\mu}, \tau_\lambda),
$$
where `$ \, \overset{d}{=}$' denotes equality in distribution.
From this, we deduce that, for every event $\mathcal{A} \in \mathcal{S}$, 
$
\boldsymbol{\mathcal{P}} \big ( (  \eta_\mu,  \tau_\lambda     ) \in  \mathcal{A}   \big ) =  \mathcal{P}_{ \mu, \lambda} \big ( (  \eta,  \tau     ) \in  \mathcal{A}   \big ).
$
\end{proposition}
In the next two subsections we will use this coupling to prove equations (\ref{eq:partial1}) and (\ref{eq:partial2}).

\subsection{Proof of equation (\ref{eq:partial1})}
\label{sect:proof1}
Let $\mathcal{A}$ be any relevant event with (finite) domain $K \subset V$. 
To begin, we deduce from Proposition \ref{prop: coupling}
that,  since $\mathcal{A}$ is increasing,  then for any $\delta >0$,
\begin{align}
\begin{split}
\label{eq:first term}
\mathcal{P}_{\lambda, \mu}    \big  (\mathcal{A}  \big  ) - 
\mathcal{P}_{\lambda + \delta, \mu} \big ( \mathcal{A}  \big  )  
=
\sum\limits_{  \substack{ \tilde M  \in \mathbb{N}_0^K  }}  \boldsymbol{\mathcal{P}} \big (   
M_{K, \eta_\mu, \tau_0} = \tilde M, 
(\eta_\mu, \tau_\lambda) \in \mathcal{A}, 
(\eta_\mu, \tau_{\lambda + \delta}) \notin \mathcal{A} \big ),
\end{split}
\end{align}
where,  since $M_{K, \eta, \tau}(y) = 0$ for $y \not\in K$ a.s,
by a slight abuse of notation we consider the function $M_{K, \eta, \tau}$ as taking values in 
$\mathbb{N}_0^K$ rather than in  $\mathbb{N}_0^V$. 
 Recall the coupling construction which was defined in Section \ref{sect:coupling}.
For arbitrary $(y,j) \in K \times \mathbb{N}$,
 we define the events
in the probability space $\boldsymbol{\mathcal{P}}$,
\begin{equation}
\label{eq:eventsB}
\begin{split}
&  \mathcal{B}^{y,j, +}   := \{  R_{\lambda + \delta}^{y,j} > 0 \} \cap \{ 
 R_{\lambda}^{y,j} = 0 \},
 \\
&  \mathcal{B}^{y,j, -}_{\tilde M}   :=  \bigcap_{   \substack{ (x, k)  \in K \times \mathbb{N} : \\ 
k\leq \tilde M(x) + 1   \\ 
(x, k ) \neq (y,j)}   }   \Big \{   R_{\lambda + \delta}^{x,k} >0 \iff R_{\lambda}^{x,k}>0 \Big \}, \\
&  \mathcal{B}^{2}_{\tilde M}   := \bigcup_{ \substack{(v, n), (x,k) \in K \times \mathbb{N} : \\ 
 (v,n) \neq (x,k), \,
n  \leq   \tilde M(v) + 1, \,
k \leq  \tilde M(x) +1 
} } \Big \{ R_{\lambda + \delta}^{v,n} > 0, \, 
 R_{\lambda + \delta}^{x,k} > 0, \,
 R_{\lambda}^{v,n} = 0,   \, 
 R_{\lambda}^{x,k} = 0 \,  \Big \}.
\end{split}
\end{equation}

Using  the  fact that  $\mathcal{A}$ is 
relevant, applying Lemma \ref{lemma:propertyessentialevents} and the conditional probability formula, we obtain that 
\begin{multline}
 \boldsymbol{\mathcal{P}} \big (   
 M_{K, \eta_\mu, \tau_0 } = \tilde M, 
 (\eta_\mu, \tau_\lambda) \in \mathcal{A}, 
(\eta_\mu, \tau_{\lambda + \delta}) \notin \mathcal{A}
\,  \big ) = \\
\label{eq:two terms}
 \sum\limits_{ \substack{ ( y,j) \in K \times \mathbb{N} : \\  j \leq \tilde M(y)+1 }}   \boldsymbol{\mathcal{P}} \big (   \,  
M_{K, \eta_\mu, \tau_0 } = \tilde M,  (\eta_\mu, \tau_\lambda) \in \mathcal{A}, 
(\eta_\mu, \tau_{\lambda + \delta}) \notin \mathcal{A},  \mathcal{B}_{\tilde M}^{y,j,-}   \Big | \, \, 
\mathcal{B}^{y,j,+} \, \big )  
\boldsymbol{\mathcal{P}} (   \mathcal{B}^{y,j,+}\,  )
\\  + 
  \boldsymbol{\mathcal{P}} \big (   
M_{K, \eta_\mu, \tau_0 } = \tilde  M,  (\eta_\mu, \tau_\lambda) \in \mathcal{A}, 
(\eta_\mu, \tau_{\lambda + \delta}) \notin \mathcal{A},   \,  \mathcal{B}_{\tilde M}^{2} \, \big ).
\end{multline}
We now let  $f_\delta (\tilde M)$ and 
 $u_\delta (\tilde M)$
 be the respectively the first and second term in the  right-hand side (RHS) of the previous expression.
Thus we deduce from (\ref{eq:first term}) and (\ref{eq:two terms}) that,
\begin{equation}\label{eq:xterms}
\lim_{\delta \rightarrow 0^+} \frac{1}{\delta}  \big [
\mathcal{P}_{\lambda, \mu}    \big  (\mathcal{A}  \big  ) - 
\mathcal{P}_{\lambda + \delta, \mu} \big ( \mathcal{A}  \big  )  \big ] 
=
\lim_{\delta \rightarrow 0^+} 
\sum\limits_{ \tilde M \in \mathbb{N}_0^K   }  
 \frac{1}{\delta}
 f_\delta (\tilde M)
 +   \, 
\lim_{\delta \rightarrow 0^+} 
\sum\limits_{ \tilde M \in \mathbb{N}_0^K   }  
 \frac{1}{\delta}
u_\delta (\tilde M).
\end{equation}
We now consider the two terms in the  right-hand side (RHS) of the previous identity separately. 

\subsubsection{First term in the RHS of (\ref{eq:xterms})}
To begin, note that 
 in the limit as $\delta \rightarrow 0^+$,  uniformly in $y \in V$ and $j \in \mathbb{N}$,
\begin{equation}\label{eq:orderdelta}
\boldsymbol{\mathcal{P}} (   \mathcal{B}^{y,j,+}  ) = 
\boldsymbol{\mathcal{P}} \Big (   Y_{y,j} \in \big ( \, \frac{\lambda}{1+\lambda}, 1 \big ] \setminus ( \, \frac{\lambda + \delta}{1 + \lambda + \delta},1  \, \big ]  \Big ) = 
\delta \, \big ( \frac{1}{1+\lambda} \big )^2  + O(\delta^2).
\end{equation}
Consider now an arbitrary $\tilde M \in \mathbb{N}_0^K$. 
Using independence, the definition of sleeping-essential pair,  the important Remark  \ref{remark:dependences} for the second identity,  
and the fact that for each given $\tilde M \in \mathbb{N}_0^K$, 
$\lim_{\delta \rightarrow 0^+} \boldsymbol{\mathcal{P}}( \mathcal{B}_{\tilde M}^{y,j,-} ) = 1$ for the third identity,  we obtain that,
\begin{multline}\label{eq:essential limit} 
 \lim\limits_{\delta \rightarrow 0^+}  \frac{1}{\delta}  f_{\delta} ( \tilde M)   \\  
 =   \lim\limits_{\delta \rightarrow 0^+}  \boldsymbol{\mathcal{P}} \big (   \, 
M_{K, \eta_\mu, \tau_0 } = \tilde M,  (\eta_\mu, \tau_\lambda) \in \mathcal{A}, 
(\eta_\mu, \tau_{\lambda + \delta}) \notin \mathcal{A},  \, \,  \mathcal{B}_{\tilde M}^{y,j,-}   \Big | \, \, 
\mathcal{B}^{y,j,+} \, \big ) \big ( \frac{1}{1+\lambda} \big )^2        \\ 
=
 \lim\limits_{\delta \rightarrow 0^+}  \boldsymbol{\mathcal{P}} \big (   M_{K, \eta_\mu, \tau_0} = \tilde M, 
 (\eta_\mu, \Gamma^{y,j}_{-}( \tau_{\lambda})) \in \mathcal{A},  
(\eta_\mu,  \Gamma^{y,j}_1( \tau_{\lambda + \delta})) \notin \mathcal{A} , \,  \mathcal{B}_{\tilde M}^{y,j,-}  \ \,  )  \big ( \frac{1}{1+\lambda} \big )^2    \\ = 
  \mathcal{P}_{\lambda, \mu} \big (  \{  (y,j) \mbox { is s-essential for } \mathcal{A}  \} \cap \{   M_{K, \eta_\mu, \tau_0} = \tilde M\}   \big) \big ( \frac{1}{1+\lambda} \big )^2.
  \end{multline}
Moreover,  from (\ref{eq:orderdelta}) we deduce that for 
 any $\tilde M \in \mathbb{N}^K_0$,
$$
\frac{1}{\delta} f _{\delta} ( \tilde M) \leq  
 \sum\limits_{ y \in K}   ( \tilde M(y) + 1)
 \boldsymbol{\mathcal{P}} \big (   
M_{K, \eta_\mu, \tau_0 } = \tilde M  \, \big ) \, 
\big ( \frac{1}{(1+\lambda)^2}  + O(\delta) \big ).
$$
Since the quantity  in the RHS of the previous expression
is summable in $\tilde M$ and the sum is uniformly bounded in $\delta \in (0, 1)$, 
we can use  (\ref{eq:essential limit}) and apply the  dominated convergence theorem to conclude that,
\begin{multline*}
  \lim\limits_{\delta \rightarrow 0^+} 
  \sum\limits_{ \tilde M \in \mathbb{N}_0^K   } 
  \frac{1}{\delta} f _{\delta} ( \tilde M)
  = 
  \sum\limits_{ \tilde M \in \mathbb{N}_0^K   } 
      \lim\limits_{\delta \rightarrow 0^+} 
  \frac{1}{\delta} f _{\delta} ( \tilde M) \\
  = 
  (\frac{1}{1 + \lambda})^2 \, 
\sum\limits_{ \tilde M \in \mathbb{N}_0^K   } 
\sum\limits_{ \substack{ ( y,j) \in K \times \mathbb{N} : \\  j \leq \tilde M(y)+1 }}  
 \mathcal{P}_{\lambda, \mu} \big (  \{  (y,j) \mbox { is s-essential for } \mathcal{A}  \} \cap \{   M_{K, \eta_\mu, \tau_0} = \tilde M\}   \big) \\  
 =  (\frac{1}{1 + \lambda})^2 \, 
\sum\limits_{ \substack{ ( y,j) \in K \times \mathbb{N} }}  
 \mathcal{P}_{\lambda, \mu} \big (   (y,j) \mbox { is s-essential for } \mathcal{A}   \big).
  \end{multline*}
To conclude the proof it remains then to show that the second term in the RHS of 
(\ref{eq:xterms}) equals zero. 
  
  \subsubsection{Second term in the RHS of (\ref{eq:xterms})}
  We now prove that the second term in the RHS of 
(\ref{eq:xterms}) equals zero. 
For this, note that in the limit as $\delta \rightarrow 0^+$,
\begin{multline}
\label{eq:secondordertermlambda}
\sum\limits_{  \substack{ \tilde M \in  \mathbb{N}_0^K}}  u_{\delta}( \tilde M) \leq \sum\limits_{  \substack{ \tilde M \in  \mathbb{N}_0^K }} 
\boldsymbol{\mathcal{P}} \big (   M_{K, \eta_\mu, \tau_0} = \tilde M,  \mathcal{B}_{\tilde M}^{2} \, 
 \big ) \\
 \leq 
\sum\limits_{  \substack{ \tilde M \in  \mathbb{N}_0^K}}   
\, \,   \sum\limits_{ \substack{ (x, k), (y,j) \in K \times \mathbb{N} : \\ (x,k) \neq (y,j) \\ k \leq 
\tilde M(x) + 1, j \leq \tilde M(y) + 1}}
\boldsymbol{\mathcal{P}} \Big (   Y_{y,j},  Y_{x,k}  \in \big ( \, \frac{\lambda}{1+\lambda}, 1 \big ] \setminus ( \, \frac{\lambda + \delta}{1 + \lambda + \delta},1  \, \big ],     M_{K, \eta_\mu, \tau_0} = \tilde M
 \Big ),
  \\
= 
\big (  \delta^2  ( \frac{1}{1 + \lambda})^4 + o (\delta^2) \big )
\sum\limits_{ \substack{ (x, k), (y,j) \in K \times \mathbb{N}_0  : \\ (x,k) \neq (y,j) }} 
\boldsymbol{\mathcal{P}} \Big (   M_{K, \eta_\mu, \tau_0} (x)  \geq k, 
 M_{K, \eta_\mu, \tau_0} (y)  \geq j   \Big ),
\end{multline}
where
for the first inequality we used the union bound, 
for the first identity we used the independence
between the function $M_{K, \eta_\mu, \tau_0}$ 
and the functions $Y_{y,j}$.
Now note that, since $K$ is finite, 
then the sum in the last expression is finite and depends only on  $\mu$ and $K$.
This implies that the second term in the RHS of (\ref{eq:xterms}) equals zero and concludes the proof of the right partial derivative. 
 
 \subsubsection{The left partial partial derivative}
To see that (\ref{eq:partial1}) holds  with  `$\delta \rightarrow 0^-$'
in place of `$\delta \rightarrow 0$' we observe that,  since the event $\mathcal{A}$ is increasing, 
then  for any  $\delta > 0$ we have that,
\begin{align*}
\mathcal{P}_{\lambda - \delta, \mu} \big ( \mathcal{A}  \big  ) - \mathcal{P}_{\lambda, \mu}    \big  (\mathcal{A}  \big  )   = 
\sum\limits_{\tilde M \in \mathbb{N}_0^K} 
\boldsymbol{\mathcal{P}} \big (   \, M_{K, \eta_\mu \tau_0} = \tilde M,
(\eta_\mu, \tau_\lambda) \not\in \mathcal{A}, 
(\eta_\mu, \tau_{\lambda - \delta}) \in \mathcal{A}  \, \big ).
\end{align*}
Now the proof follows the same steps as the proof of  the right partial derivative,
with the following events
\begin{align*}
&  \mathcal{C}^{y,j, +}   := \{  R_{\lambda - \delta}^{y,j} = 0 \} \cap \{ 
 R_{\lambda}^{y,j} > 0 \},
 \\
&  \mathcal{C}^{y,j, -}_{\tilde M}   :=  \bigcap_{   \substack{ (x, k)  \in K \times \mathbb{N} : \\ 
k\leq \tilde M(x) + 1   \\ 
(x, k ) \neq (y,j)}   }   \Big \{   R_{\lambda - \delta}^{x,k} >0 \iff R_{\lambda}^{x,k}>0 \Big \}, \\
&  \mathcal{C}^{2}_{\tilde M}   := \bigcup_{ \substack{(v,n), (x,k) \in K \times \mathbb{N} : \\ 
 (v,n) \neq (x,k),
n  \leq   \tilde M(v) + 1,
k \leq  \tilde M(x) +1
} } \Big \{ R_{\lambda - \delta}^{v,n} = 0, \, 
 R_{\lambda - \delta}^{x,k} = 0, \,
 R_{\lambda}^{v,n} > 0,   \, 
 R_{\lambda}^{x,k} > 0 \,  \Big \}.
\end{align*}
playing the role of those defined in  (\ref{eq:eventsB}) 
for every $(y,j) \in K \times \mathbb{N}$ and $\tilde M \in \mathbb{N}_0^K$, 
and 
\begin{multline*}
 \lim\limits_{\delta \rightarrow 0^+}  
  \boldsymbol{\mathcal{P}} \big (   \,  
M_{K, \eta_\mu, \tau_0 } = \tilde M,  (\eta_\mu, \tau_{\lambda - \delta} ) \in \mathcal{A}, 
(\eta_\mu, \tau_{\lambda }) \not\in \mathcal{A}, \,   \mathcal{C}_{\tilde M}^{y,j,-}   \Big | \, \, 
\mathcal{C}^{y,j,+} \, \big )    =   \\ 
 \lim\limits_{\delta \rightarrow 0^+}  \boldsymbol{\mathcal{P}} \big (   \,  M_{K, \eta_\mu, \tau_0} = \tilde M, \,   (\eta_\mu, \Gamma^{y,j}_{-}( \tau_{\lambda - \delta})) \in \mathcal{A},  
(\eta_\mu,  \Gamma^{y,j}_{1}( \tau_{\lambda})) \not\in \mathcal{A},   \mathcal{C}_{\tilde M}^{y,j,-}  \,  )  = \\
  \mathcal{P}_{\lambda, \mu} \big (  \{  (y,j) \mbox { is s-essential for } \mathcal{A}  \} \cap \{   M_{K, \eta_\mu, \tau_0} = \tilde M\}   \big).
\end{multline*}
playing the role of (\ref{eq:essential limit}).
Since we obtain the same formula for the right and left partial derivative, the proof is concluded.

\subsection{Proof of equation (\ref{eq:partial2})}
\label{sect:proof2}
We now turn to the proof of the second partial derivative. 
Let  $\mathcal{A}$ be an arbitrary relevant event. 
From Proposition \ref{prop: coupling}, by the fact that $\mathcal{A}$ is increasing
we obtain that
for any $\delta >0$,
\begin{equation}\label{eq:1}
\mathcal{P}_{\lambda, \mu+\delta}(\mathcal{A}) - 
\mathcal{P}_{\lambda, \mu}(\mathcal{A}) = \sum\limits_{\tilde \eta \in \mathcal{H}_a} \boldsymbol{\mathcal{P}} \big (  (\eta_{\mu + \delta}, \tau_\lambda) \in \mathcal{A},  
(\eta_{\mu }, \tau_\lambda) \not\in \mathcal{A}, \eta_\mu = \tilde \eta \big ).
\end{equation}
To begin, we define the sets,
\begin{equation}\label{eq:setsE}
\begin{split}
 & \mathcal{E}^{x,+}   := \{ \eta_{\mu+\delta}(x) > \eta_\mu(x)   \} , \\
 & \mathcal{E}^{x,-}  : = \{\forall y \in K \setminus \{x\},  
\eta_{\mu+\delta}(y) = \eta_\mu(y)    \}, \\
& \mathcal{E}^{2}   : = \{ \exists x_1, x_2 \in K \, \, : \, \, 
x_1 \neq x_2 \mbox{ and } \eta_{\mu + \delta}(x_1) >\eta_\mu(x_1),  \,  \eta_{\mu + \delta}(x_2) > \eta_\mu(x_2) \,  \}, \\
& \mathcal{E}^{x}_{\tilde \eta}   := \{ \eta \in \mathcal{H} \, \ : \,  \forall y \in K \setminus \{x\} \, \, \, 
\eta_\mu(y) = \tilde \eta(y)  \},
\end{split}
\end{equation}
where the first three sets are elements of the sigma-algebra $\mathcal{F}$, the last set is defined as a subset of $\mathcal{H}$, and  $\tilde \eta \in \mathcal{H}$.
Now note that, for each $\tilde \eta \in \mathcal{H}_a$, 
we have that,
\begin{multline}\label{eq:equationgoal}
\boldsymbol{\mathcal{P}} \Big (  \, (\eta_{\mu + \delta}, \,\,  \tau_\lambda) \in \mathcal{A}, \,  \, 
(\eta_{\mu }, \tau_\lambda) \not\in \mathcal{A}, \, \, \eta_\mu = \tilde \eta \, \Big ) =  \\
\sum\limits_{x \in K} \boldsymbol{\mathcal{P}} \Big (  (\eta_{\mu + \delta}, \tau_\lambda) \in \mathcal{A},  \, 
(\eta_{\mu }, \tau_\lambda) \not\in \mathcal{A},\,   \mathcal{E}^{x,-}, \, 
\eta_\mu \in \mathcal{E}^x_{\tilde \eta} \, 
\Big | \, \,\mathcal{E}^{x,+}, \eta_\mu(x) = \tilde \eta(x)  \Big ) \, \, 
\\ \times \boldsymbol{\mathcal{P}} \Big (   \mathcal{E}^{x,+}, \eta_\mu(x) = \tilde \eta(x) \Big ) \, \,  + \, \, 
\boldsymbol{\mathcal{P}} \Big (  (\eta_{\mu + \delta}, \tau_\lambda) \in \mathcal{A},  
(\eta_{\mu }, \tau_\lambda) \not\in \mathcal{A},
  \mathcal{E}^2, \eta_\mu = \tilde \eta \Big ).
\end{multline}
We now let $g_{\delta}(\tilde \eta)$
and  $h_{\delta}(\tilde \eta)$ respectively be the first and second term in the RHS of the previous expression. 
Using (\ref{eq:equationgoal})  in (\ref{eq:1})
we then deduce that 
\begin{equation}\label{eq:expressionmu}
\lim_{\delta \rightarrow 0^+} 
\frac{\mathcal{P}_{\lambda, \mu+\delta}(\mathcal{A}) - 
\mathcal{P}_{\lambda, \mu}(\mathcal{A})}{\delta} 
= 
\lim_{\delta \rightarrow 0^+}  
\sum\limits_{\tilde \eta \in \mathcal{H}_a   }
\frac{1}{\delta} \,  g_{\delta} (\tilde \eta) 
 +
\lim_{\delta \rightarrow 0^+}  
\sum\limits_{\tilde \eta \in \mathcal{H}_a   }
\frac{1}{\delta} \,  h_{\delta} (\tilde \eta).
\end{equation}
We now consider the two terms in the RHS of the previous expression  separately.
\subsubsection{First term in the RHS  of  (\ref{eq:expressionmu})}
To begin,   from Proposition \ref{prop: coupling} and from a simple computation
we deduce that, in the limit as 
$\delta \rightarrow 0^+$,
\begin{align}\label{eq:equation2}
\begin{split}
\boldsymbol{\mathcal{P}} \Big (\eta_{\mu+ \delta}(x) >  \eta_{\mu}(x),
\eta_{\mu}(x) = k \Big ) & = 
\boldsymbol{\mathcal{P}} \Big ( X_x \in [\nu_{\leq k-1}(\mu), \nu_{\leq k}(\mu)) \cap 
[\nu_{\leq k}(\mu+\delta), 1 ]  \Big ) \\
& =  \nu_{>k}(\mu + \delta) - 
  \nu_{>k}(\mu )  =  \delta \, \,  \nu'_{ > k} (\mu)  + E_{k, \delta},
\end{split},
\end{align}
where the last  identity defines  $E_{k, \delta}$,   which satisfies $|E_{k, \delta}| \leq   \delta^2 \, \, \frac{1}{\min\{\mu,1\}} \, k \,  \nu_{>k-1}(\mu+\delta)$ for any $\delta > 0$.
Fix now an arbitrary particle configuration $\tilde \eta \in \mathcal{H}_a$. 
Since  $\lim_{\delta \rightarrow 0^+} \boldsymbol{\mathcal{P}}
\big (  \mathcal{E}^{x,-} \, \big ) = 1,$
we deduce  from the definition of particle-essential pair, from  Remark \ref{remark:dependences} and from (\ref{eq:equation2}) that,
\begin{multline}\label{eq:equation1}
 \lim_{\delta \rightarrow 0^+}  \,   \frac{1}{\delta} g_{\delta}( \tilde \eta) \\ =  
  \lim\limits_{\delta \rightarrow 0^+}  \sum\limits_{x \in K}
\boldsymbol{\mathcal{P}} \Big (  (\eta_{\mu + \delta}, \tau_\lambda) \in \mathcal{A},  
(\eta_{\mu }, \tau_\lambda) \not\in \mathcal{A}, \,   \mathcal{E}^{x,-}, \,  \eta_\mu \in \mathcal{E}^x_{\tilde \eta} \, \Big | \, \,\mathcal{E}^{x,+}, \eta_\mu(x) = \tilde \eta(x)  \Big )  \nu^\prime_{> \tilde \eta(x)}  \\ = \sum\limits_{x \in K} \boldsymbol{\mathcal{P}} \Big ( \Big  \{ (\eta_\mu, \tau_\lambda) \in \{ (x, \tilde \eta(x) )  \mbox{ is p-essential for }
\mathcal{A} \big \}  \Big \} \cap \{ \eta_\mu \in   \mathcal{E}^{x}_{\tilde \eta} \}  \Big )\nu^\prime_{> \tilde \eta(x)}.
\end{multline}
Moreover, note that from (\ref{eq:equation2}),  from the fact that
$\nu^\prime_{>k} = \nu_{k}$ for any $k \in \mathbb{N}_0$,
and from  Remark \ref{remark:dependences}, we deduce that
for any $\delta> 0$,
\begin{align*}
\frac{1}{\delta} g_{\delta}(\tilde \eta) &  \leq  
\sum\limits_{x \in K} \boldsymbol{\mathcal{P}}( \eta_\mu \in \mathcal{E}^x_{\tilde \eta})  \, \big (  \nu_{\tilde \eta(x)}  + \frac{E_{k,\delta}}{\delta} \big ) \\ & \leq  |K|  \boldsymbol{\mathcal{P}}( \eta_\mu = \tilde \eta )  \, + \, \delta
 \frac{1}{\min\{\mu, 1\}}  \boldsymbol{\mathcal{P}}( \eta_\mu = \tilde \eta )  \sum_{x \in K}   \, \tilde \eta(x)  \,  \frac{\nu_{> \tilde \eta(x) -1}(\mu+\delta)}{\nu_{\tilde \eta(x) }(\mu)}.
\end{align*}
Since the quantity in the RHS 
is summable in $\tilde \eta$ and the sum is uniformly bounded for $\delta \in (0, 1)$,
we can use the dominated convergence theorem  and deduce from (\ref{eq:equation1}) that,
\begin{multline*}
\lim_{\delta \rightarrow 0^+}  \, 
\sum\limits_{ \tilde \eta \in \mathcal{H}_a  }   \frac{1}{\delta} g_{\delta}( \tilde \eta) =
\sum\limits_{ \tilde \eta \in \mathcal{H}_a    } \lim_{\delta \rightarrow 0^+}  \,   \frac{1}{\delta} g_{\delta}( \tilde \eta) \\ =\sum\limits_{ \tilde \eta \in \mathcal{H}_a    }  \sum\limits_{x \in K} \boldsymbol{\mathcal{P}} \Big ( \Big  \{ (\eta_\mu, \tau_\lambda) \in \{ (x, \tilde \eta(x) )  \mbox{ is p-essential for }
\mathcal{A} \big \}  \Big \} \cap \{ \eta_\mu \in   \mathcal{E}^{x}_{\tilde \eta} \}  \Big )\nu^\prime_{> \tilde \eta(x)}  \\ = \sum\limits_{x \in K, j \in \mathbb{N}_0}  \mathcal{P}_{\lambda, \mu} \big (  (x,j)\, \mbox{ is particle-essential for } \mathcal{A} \big )  \, \,   \nu'_{ > j}(\mu).
\end{multline*}
To conclude the proof it remains then to show that the  second term in the RHS of  (\ref{eq:expressionmu})
equals zero.

\subsubsection{Second term in the RHS  of  (\ref{eq:expressionmu})}
We now show that the second term in the RHS of  (\ref{eq:expressionmu}) equals zero. 
For this,  we apply the union bound and obtain that,
\begin{multline}
\label{eq:equation3}
\boldsymbol{\mathcal{P}} \Big (  (\eta_{\mu + \delta}, \tau_\lambda) \in \mathcal{A},  
(\eta_{\mu }, \tau_\lambda) \not\in \mathcal{A}, \mathcal{E}^2 \Big ) \\   \leq  
\sum\limits_{ \substack{x_1, x_2 \in K  \\ x_1 \neq x_2}} 
\sum\limits_{ k_1, k_2 \geq 0} 
\boldsymbol{\mathcal{P}} \Big (  
\eta_{\mu}(x_1) = k_1,  
\eta_{\mu}(x_2)=k_2,
 \eta_{\mu+ \delta}(x_1) > k_1,   
 \eta_{\mu+ \delta}(x_2) > k_2  \Big )   \\
\leq  |K|^2 
\Big (  \sum\limits_{k \geq 0} 
\boldsymbol{\mathcal{P}} \Big (  
\eta_{\mu}(o) = k, 
 \eta_{\mu+ \delta}(o) > k \Big )  
 \Big )^2
 \\ \leq 
 |K|^2 \Big (   \sum\limits_{k \geq 0}   \big (  \nu_{>k}(\mu + \delta) - 
  \nu_{>k}(\mu ) \big )    \Big )^2
 = 
 \delta^2    \, |K|^2,
 \end{multline}
 where we used the fact that for the Poisson distribution $\sum_{k \geq 0} \nu_{>k}(\mu) = \mu$.
The previous inequality implies that the second term in the RHS of (\ref{eq:expressionmu}) equals zero.
This concludes the proof of  the right partial derivative.

\subsubsection{The left partial derivative} 
We now show that (\ref{eq:partial2}) holds with `$\delta \rightarrow 0^-$'
in place of `$\delta \rightarrow 0$'. 
Since $\mathcal{A}$ is increasing, we obtain that, for positive $\delta>0$, 
$$
\mathcal{P}_{\lambda, \mu - \delta}(\mathcal{A}) - 
\mathcal{P}_{\lambda, \mu}(\mathcal{A}) = -  \sum\limits_{\tilde \eta \in \mathcal{H}_a} \boldsymbol{\mathcal{P}} \big (  (\eta_{\mu - \delta}, \tau_\lambda) \not\in \mathcal{A},  
(\eta_{\mu }, \tau_\lambda) \in \mathcal{A}, \eta_\mu = \tilde \eta \big ).
$$
The proof is now analogous to that of the right partial derivative,  with the sets 
\begin{equation*}
\begin{split}
 & \mathcal{D}^{x,+}   := \{ \eta_{\mu- \delta}(x) < \eta_\mu(x)   \} , \\
 & \mathcal{D}^{x,-}  : = \{\forall y \in K \setminus \{x\},  
\eta_{\mu - \delta}(y) = \eta_\mu(y)    \}, \\
& \mathcal{D}^{2}   : = \{ \exists x_1, x_2 \in K \, \, : \, \, 
x_1 \neq x_2 \mbox{ and } \eta_{\mu - \delta}(x_1)  < \eta_\mu(x_1),  \,  \eta_{\mu- \delta}(x_2) < \eta_\mu(x_2) \,  \}, \\
\end{split}
\end{equation*}
playing the role of those defined in (\ref{eq:setsE}), with
\begin{align}\label{eq:equation2inversa}
\begin{split}
\boldsymbol{\mathcal{P}} \Big (\eta_{\mu -  \delta}(x) <  \eta_{\mu}(x),
\eta_{\mu}(x) = k \Big ) & = 
\boldsymbol{\mathcal{P}} \Big ( X_x \in [\nu_{\leq k-1}(\mu), \nu_{\leq k}(\mu)) \cap 
[0,  \nu_{\leq k-1}(\mu -\delta) ]  \Big ) \\
& 
 =   \nu_{ > k - 1 }(\mu ) -     \nu_{ > k - 1}(\mu - \delta)  = 
    \delta \, \,  \nu'_{ > k - 1}  + o(\delta).
\end{split}.
\end{align}
playing the role of (\ref{eq:equation2}) for every $k \geq 1$,
and 
\begin{multline*}
\lim\limits_{\delta \rightarrow 0^+} 
\boldsymbol{\mathcal{P}} \Big (  (\eta_{\mu - \delta}, \tau_\lambda) \not\in \mathcal{A},  
(\eta_{\mu }, \tau_\lambda) \in \mathcal{A}, \,   \mathcal{D}^{x,-}, \,  \eta_\mu \in \mathcal{E}^x_{\tilde \eta} \, \big | \, \,\mathcal{D}^{x,+}, \eta_\mu(x) = \tilde \eta(x)  \Big )  =  \\
 \boldsymbol{\mathcal{P}} \Big (  \Big \{ (\eta_\mu, \tau_\lambda) \in \{ (x, \tilde \eta(x) - 1 )  \mbox{ is p-essential for }
\mathcal{A} \big \}  \Big \}  \cap   \{ \eta_\mu \in  \mathcal{E}^{x}_{\tilde \eta}   \} \Big ).
\end{multline*}
being used  for any $\tilde \eta \in \mathcal{H}_a$
and $x \in K$ such that $\tilde \eta(x) \geq 1$
in the step which is analogous  to (\ref{eq:equation1}).
This concludes the proof.

\section{The key differential inequality}
\label{sect:proofs}
The goal of this section is to state and prove Theorem \ref{theo:first comparison} below, which provides a precise formulation of the differential inequality (\ref{eq:inequality}). 
This section is divided into three subsections. In the first subsection we provide an alternative formula for  (\ref{eq:partial1}), corresponding to
Proposition \ref{prop:alternative form} below.
In the second subsection we state an important comparison lemma. In the last subsection we state and prove our differential inequality.

\subsection{Alternative formula for  (\ref{eq:partial1})}
Our first step is  to provide an alternative formula for the partial derivative with respect to $\lambda$ which appears in Theorem \ref{theo:Russo}.
This is a consequence of Lemma \ref{lemma:essentiality}
and is important for the comparison with the partial derivative with respect 
to $\mu$.
We start with a technical lemma,
which is a consequence of Lemma \ref{lemma:essentiality}.
\begin{lemma}\label{lemma:n=Ly2}
Let $\mathcal{A}$ be a  relevant event with domain $K$.
For every $y \in K$,  $ n \in \mathbb{N}$,
\begin{equation}\label{eq:inclusionempty}
\big \{ (y, n) \mbox{ is s-essential for } \mathcal{A}  \big \} \,  
\cap \, \big \{(\eta, \tau) \in \mathcal{W}  :  \, M_{K, \eta, \tau}(y)  \neq n -1 \mbox{ and } S_{\tau}^{y,n}>0 \big \} = \emptyset.
\end{equation}
\end{lemma}
\begin{proof}
Suppose that   $(\eta, \tau) \in \{  (y,n)$ is s-essential for $\mathcal{A}\} \cap \mathcal{W}$  and that  $S_{\tau}^{y,n}>0$.  We will show that it is necessarily the case that $M_{K,\eta, \tau}(y) = n - 1$, thus implying (\ref{eq:inclusionempty}).
To begin, we  deduce by definition of sleeping-essential pair and by the fact that $S_{\tau}^{y,n}>0$ that,
\begin{equation}\label{eq:desiredcontra}
(\eta, \tau) \not\in \mathcal{A} \quad \mbox{ and } \quad 
(\eta, \Gamma_-^{y,n}(\tau)) \in \mathcal{A}.
\end{equation}
Suppose  that  $M_{K \eta, \tau}(y) \neq n - 1 $. This will lead  to a contradiction.
Indeed, by  Lemma \ref{lemma:essentiality} we deduce that 
$
M_{K, \eta, \tau}    = 
M_{K, \eta, \Gamma_-^{y,n}(\tau)}.
$
From this, from the fact that $\mathcal{A}$  is relevant 
and from the fact that  $(\eta, \tau) \not\in \mathcal{A}$, 
 we deduce that  $(\eta,  \Gamma_-^{y,n}(\tau)) \not\in \mathcal{A}.$ This, however, contradicts (\ref{eq:desiredcontra}), and we obtained the desired contradiction.
\end{proof}

We now present our alternative formula for  the partial derivative with respect to $\lambda$ which appears in Theorem \ref{theo:Russo}.

\begin{proposition}\label{prop:alternative form}
Let $\mathcal{A}$ be a   relevant event. 
for every $\lambda \in (0, \infty)$ we have that,
\begin{multline}
\label{eq:newpartial1}
 \frac{ \partial}{\partial \lambda }  \mathcal{P}_{\lambda, \mu} \big (  \,  \mathcal{A}  \big ) \\
= - \frac{1}{\lambda ( 1+ \lambda)} \, \sum\limits_{y \in K} 
\sum\limits_{n=1}^{\infty} 
 \mathcal{P}_{\lambda, \mu} \Big (   \{ (y,n) \mbox{ is s-essential for } \mathcal{A}\} \,  \cap \, \{ M_{K, \eta, \tau}(y) = n -1  \}  \cap  \{ S^{y,n} > 0 \}   \,   \Big )
\end{multline} 
\end{proposition}
\begin{proof}
Below we use 
Remark \ref{remark:dependences} for the first identity and Lemma \ref{lemma:n=Ly2} for the third identity, obtaining that, for every $y \in K$,
\begin{align*}
& \sum\limits_{j=1}^{\infty} 
\mathcal{P}_{\lambda, \mu}\big (  (y,j) \mbox{ is s-essential} \big )   =
\frac{1+\lambda}{\lambda}
\, \sum\limits_{j=1}^{\infty} 
\mathcal{P}_{\lambda, \mu}\big (   \{(y,j) \mbox{ is s-essential} \big\} \cap \{ S^{y,j} >0\} \big  ) \\
& =  \frac{1+\lambda}{\lambda}  \, \, \, \sum\limits_{j=1}^{\infty} 
\sum\limits_{n=0}^{\infty} 
\mathcal{P}_{\lambda, \mu}\big (   \{(y,j) \mbox{ is s-essential} \big\}   \cap \{ S^{y,j} >0\} \cap \{  M_{K}(y) = n \} \big  ) \\
& = \frac{1+\lambda}{\lambda}  \, \, \, \sum\limits_{j=1}^{\infty} 
\mathcal{P}_{\lambda, \mu}\big (   \{(y,j) \mbox{ is s-essential} \big\}  \cap  \{ S^{y,j} >0\} \cap \{  M_{K}(y) = j-1 \} \big  ).
\end{align*}
By using the previous formula and Theorem \ref{theo:Russo}, we conclude the proof.
\end{proof}

\subsection{Comparison lemma}
We now state and prove our  comparison lemma,
Lemma \ref{lemma:comparisonlemma} below, 
which is the core of the proof of our differential inequality. 
 \begin{lemma}[Comparison lemma]\label{lemma:comparisonlemma}
Consider any relevant event $\mathcal{A}$ with domain $K \subset V$. 
For any $y \in K$,  $n, j \in \mathbb{N}_0$,   with $n>0$, we have that,
\begin{equation}\label{eq:inclusion}
\begin{split}
&  \mathcal{W} \cap \big \{
(y, n ) \mbox{ is s-essential for } \mathcal{A} \, \big \}
\cap \big \{ \eta(y) = j   \big \} \cap  \big \{ S_{\tau}^{y,n}>0 \big \} \cap \{  M_{K, \eta, \tau}(y)= n - 1  \} \\
\subset \,  &  \, \mathcal{W} \cap   \big  \{(y, j)   \mbox{ is p-essential for $\mathcal{A}$}
\big \} \cap \big \{  
\eta(y) = j   \big \} \cap \big \{  S_{\tau}^{y,n}>0 \big \} \cap 
 \{ M_{K, \eta, \tau}(y)  = n -1  \}.
 \end{split}
\end{equation}
\end{lemma}
\begin{proof}
Let  $(\eta,  \tau)$ be a realisation which belongs to the event in the left-hand side (LHS) of (\ref{eq:inclusion}), we will show that it also belongs to the event in the RHS.
Set  $n = M_{K, \eta, \tau}(y)  + 1$.
By definition of sleeping-essential pair and by the fact that $S^{y,n}_{\tau}>0$ we deduce that,
\begin{equation}\label{eq:dom0}
(\eta,  \tau ) \not\in \mathcal{A} \quad  \mbox{ and } \quad (\eta,  \Gamma_-^{y,  n} (\tau)) \in \mathcal{A}.
\end{equation}
By monotonicity,  Lemma \ref{lemma:monotonicity},
we have that,
\begin{equation}\label{eq:dom1}
M_{K,  \eta, \tau} \leq  M_{K,  \eta, \Gamma_-^{y,  n}(\tau)} \leq M_{K, \eta^y,  \Gamma_-^{y, n}(\tau)}.
\end{equation}
By the fact that, by the Abelian property, 
   $n =  M_{K, \eta, \tau}(y) + 1 < M_{K, \eta^y, \tau}(y)  +1$,
   we deduce from  Lemma \ref{lemma:essentiality} that
\begin{equation}\label{eq:dom21}
M_{K,  \eta^y,  \Gamma_-^{y, n}(\tau)} =
M_{K, \eta^y, \tau}.
\end{equation}
From (\ref{eq:dom0}), (\ref{eq:dom1})  (\ref{eq:dom21}), and from the fact that  $\mathcal{A}$
is relevant we deduce that, 
$
(\eta^y,  \tau \, ) \in \mathcal{A}.
$
Summarising, $(\eta, \tau)$ is such that
  \textbf{(1)} $(\eta, \tau) \not\in \mathcal{A}$ by (\ref{eq:dom0}),
\textbf{(2)}  $\eta(y) = j$ by assumption, and
\textbf{(3)} $(\eta^y,  \tau \, ) = (\eta^{y,j+1}, \tau) \in \mathcal{A}$
by (\ref{eq:dom0}), and  (\ref{eq:dom21}).
These three facts imply that  the pair  $(\eta, \tau)$  belongs to the event `$(y,j)$ is p-essential for $\mathcal{A}$'.
From this we deduce that $(\eta, \tau)$ belongs to the event in the RHS of (\ref{eq:inclusion}). This concludes the proof.
\end{proof}

\subsection{Differential inequality}
We are now ready to state the main result of this section. Its
proof will employ our comparison lemma. 
\begin{theorem}[{Differential inequality}]\label{theo:first comparison}
Let  $G = (V, E)$ be an arbitrary undirected  locally-finite connected graph, let 
$\mathcal{A}$ be a relevant event.
Then, for any $(\lambda, \mu) \in \mathbb{R}_+^2$, we have that,
\begin{equation}\label{eq:relation derivative 1}
 \,  -\frac{ \partial}{\partial \lambda }   \mathcal{P}_{\lambda, \mu} \big (  \, \mathcal{A} \,  \big )
\leq \, \, \frac{1}{\lambda(1+\lambda)} \, \frac{ \partial}{\partial \mu } \, \mathcal{P}_{\lambda, \mu} \big (  \, \mathcal{A} \,  \big ).
\end{equation} 
\end{theorem}
\begin{proof}
Let $K \subset V$ be the domain of the relevant event $\mathcal{A}$. 
Using Proposition \ref{prop:alternative form} for the first step,  our Comparison Lemma  for the second step, Remark \ref{remark:dependences}, the fact that for Poisson distributions $\nu^{\prime}_{>k} = \nu_k$ and  Theorem \ref{theo:Russo} for the  last step, we obtain that,
\begin{align*}
& -\frac{ \partial}{\partial \lambda }  \mathcal{P}_{\lambda, \mu} \big (  \, \mathcal{A} \,  \big ) \\
& =   
\frac{1}{\lambda(1+ \lambda)} \, \sum\limits_{y \in K}  \sum\limits_{\substack{j \geq 0 \\ n \geq  1}}
 \mathcal{P}_{\lambda, \mu}\Big ( \{ (y,n) \mbox{ is s-ess.}\}  \cap \{ \eta(y) = j\}  \cap  \{ S_\tau^{y,n} > 0 \}  \cap \{   M_{K, \eta, \tau}(y) = n - 1  \} \Big ) \\
& \leq  \frac{1}{\lambda(1+ \lambda)} \, \sum\limits_{y \in K}  \sum\limits_{\substack{j \geq 0  \\ n \geq 1}}
 \mathcal{P}_{\lambda, \mu}\Big ( \{ (y,j) \mbox{ is p-ess.}\}  \cap \{ \eta(y) = j\}  \cap  \{ S_\tau^{y,n} > 0 \}  \cap \{  M_{K, \eta, \tau}(y)  = n - 1  \} \Big ) \\
  & \leq 
 \frac{1}{\lambda ( 1+ \lambda)} \, \sum\limits_{y \in K} 
 \sum\limits_{\substack{j=0}}^\infty
 \mathcal{P}_{\lambda, \mu} \big ( \{(y,j) \mbox{ is p-essential} \} \cap \{ \eta(y)=j\}\big )  \\
& =   \frac{1}{\lambda (1+ \lambda)} \frac{ \partial}{\partial \mu }  \mathcal{P}_{\lambda, \mu} \big (  \,  \mathcal{A} \,  \big ).
 \end{align*}
This concludes the proof.
\end{proof}

\section{Proof of Theorems \ref{theo:continuity} and \ref{theo:strict monotonicity}}
\label{sect:theoremsproof}
Recall the definition of the curve $ \mathcal{C}_{\lambda, \mu}$ 
which was provided in (\ref{eq:curve}).
We will start with the proof of Theorem \ref{theo:strict monotonicity}.

\begin{proof}[{Proof of Theorem \ref{theo:strict monotonicity}}]
Consider any  relevant event $\mathcal{A}$.
Let $(\lambda, \mu) \in \mathbb{R}_+^2$ be an arbitrary point in the phase diagram, 
take any arbitrary point
$
(x,y ) \in  {\mathcal{C}}_{\lambda, \mu},
$
we assume that $x > \lambda$ (when $x = \lambda$, the result we are going to prove is already known from \cite{Rolla}).
Let 
$
( X(t), Y(t)   )_{ t \in [\lambda, \infty)   }
$
be a curve such that $X(0) : = \lambda$, $Y(0) : = \mu$, 
and for any $t \in [\lambda, \infty)$,
\begin{align*}
\begin{cases}
X(t) &: = t,  \\
Y(t) & : = s ( t - \lambda) + \mu,
\end{cases}
\end{align*}
where $s \geq \frac{1}{\lambda(1+\lambda)}$ is such that there exists a positive $T \in \mathbb{R}$ such that $X(T) = x$, $Y(T) = y$.  
From the fundamental theorem of calculus we deduce that,
\begin{align*}
\mathcal{P}_{x,y}(\mathcal{A}) & = 
\mathcal{P}_{\lambda, \mu}( \mathcal{A}) + 
\int_{\lambda}^{x} dt \,  \, \, \nabla \mathcal{P}_{X(t), Y(t)}( \mathcal{A}) \, \cdot \big (\partial_t X(t), \partial_t Y(t)\big ) =  \\
& = 
\mathcal{P}_{\lambda, \mu}( \mathcal{A}) + 
\int_{\lambda}^{x} dt \,  \, \Big ( \, 
\partial_{\lambda} \mathcal{P}_{\lambda, \mu}(\mathcal{A}) \, \big \vert_{ \lambda = t,  \mu = Y(t)} \, + \, \, s \, \, 
\partial_{\mu} \mathcal{P}_{\lambda, \, \mu}(\mathcal{A})  \big  \vert_{\lambda = t,  \mu = Y(t)   } 
\Big )  \\
& \geq 
\mathcal{P}_{\lambda, \mu}( \mathcal{A}) + 
\int_{\lambda}^{x} dt \, \, 0 \\ &  \geq  \mathcal{P}_{\lambda, \mu}( \mathcal{A}),
\end{align*}
where for the first inequality we used Theorem \ref{theo:first comparison} and the fact that $s \geq \frac{1}{\lambda(1+\lambda)} \geq  \frac{1}{t(1+t)}$ for every $t \geq 0$.
This concludes the proof.
\end{proof}
We now provide a more general definition of \textit{critical density,}
\begin{equation}
\label{eq:generaldefinitiondensity}
\forall \lambda \in [0, \infty) \quad \quad \zeta_c(\lambda) : =  \inf \Big \{ \mu \in \mathbb{R}^+_0  \, \, : \, \,  \mathcal{P}_{\lambda, \mu}( m(o) = \infty  ) > 0  \Big \},
\end{equation}
By monotonicity, the random variable $m_{\eta, \tau}(o)$ is well defined for every  locally-finite connected  infinite graph $G$.
From now on we will say that ARW \textit{fixates} if $ m(o) < \infty$ and that it is \textit{active} otherwise.  Such a notion of activity and fixation reduces to the one which has been introduced in Section \ref{sect:Intro},
thus implying   the identity $\zeta_c(\lambda) = \mu_c(\lambda)$,
whenever  Lemma \ref{lemma:01law} holds.
The next theorem is an immediate consequence of 
Theorem \ref{theo:strict monotonicity}.
\begin{theorem}\label{prop:ifactivethenactive}
Let $G$ be a locally-finite connected  graph.  For any $(\lambda, \mu) \in \mathbb{R}_+^2$ and 
$(\lambda^{\prime},\mu^{\prime}) \in \mathcal{C}_{\lambda, \mu}$ we have that,
$$
\mathcal{P}_{\lambda, \mu}(\mbox{ARW active} )  \leq 
\mathcal{P}_{\lambda^\prime, \mu^\prime}(\mbox{ARW active}  ).
$$
\end{theorem}
\begin{proof}
Define $B_L : = \{x \in V \, : \, d(x,o) \leq L\}$.
For any $L, H \in \mathbb{N}$ consider the relevant event,
$
\mathcal{A}_{L, H} :=    \{  M_{B_L}(o) > H   \}.
$
From our  monotonicity theorem, Theorem \ref{theo:strict monotonicity}, we deduce that 
for any $L, H \in \mathbb{N}$,
$
\mathcal{P}_{\lambda, \mu}(\mathcal{A}_{L,H} ) \leq 
\mathcal{P}_{\lambda^\prime, \mu^\prime}(\mathcal{A}_{L,H} ).
$
From this and from Lemma \ref{lemma:01law} we then deduce that,
\begin{multline*}
\mathcal{P}_{\lambda, \mu}(\mbox{ARW active} ) =
\lim\limits_{ H \rightarrow \infty  }
\lim\limits_{ L \rightarrow \infty  }
 \mathcal{P}_{\lambda, \mu}(\mathcal{A}_{L,H} )  \\
  \leq 
\lim\limits_{ H \rightarrow \infty  }
\lim\limits_{ L \rightarrow \infty  }
\mathcal{P}_{\lambda^\prime, \mu^\prime}(\mathcal{A}_{L,H} )
=
\mathcal{P}_{\lambda^\prime, \mu^\prime}(\mbox{ARW active}  ).
\end{multline*}
This concludes the proof.
\end{proof}

\subsection{Proof of Theorem  \ref{theo:continuity}}
We now present the proof of our continuity theorem. 
\begin{proof}
It suffices to prove the second claim of Theorem \ref{theo:continuity},
since it implies the first claim.
In the whole proof we use the fact that
it is known from
\cite{Shellef} that  on any vertex-transitive graph the critical density is finite for every $\lambda \in [0, \infty)$.
Consider an arbitrary $\lambda \in (0, \infty)$ and, for any $\epsilon > 0$ and $t \in (- \infty, \infty)$ define the function,
$$
Y_{\epsilon}(t) := \zeta_c(\lambda) +  \epsilon +  \frac{1}{\lambda(1+\lambda)}   \, (t - \lambda).
$$
Suppose that,
\begin{equation}\label{eq:lookcontra1}
\limsup\limits_{\delta \rightarrow 0^+} \frac{\zeta_c(\lambda + \delta) - \zeta_c(\lambda)}{\delta}   >  \frac{1}{\lambda (1+\lambda)},
\end{equation}
we look for a contradiction with this claim.
From (\ref{eq:lookcontra1}) it follows that  we can find a small enough $\Delta > 0$ such that
there exists an infinite  positive sequence  $(\delta_n)_{n\in \mathbb{N}}$ converging to zero with $n$ such that, for any large enough $n$,
\begin{equation}\label{eq:contradict1}
 \zeta_c(\lambda + \delta_n) \geq  \zeta_c(\lambda) \, + \, 
\delta_n (\frac{1}{\lambda(1+\lambda)} + \Delta).
\end{equation}
From Theorem \ref{prop:ifactivethenactive} and from the definition of the critical density
we deduce that,
$
 \zeta_c(t) \leq Y_{\epsilon}(t).
$
From this we deduce that,
$$
\forall \epsilon > 0, \quad \forall n \in \mathbb{N}, \quad   \zeta_c(\lambda + \delta_n)  \leq 
 Y_{\epsilon}(\lambda + \delta_n) = \zeta_c(\lambda)  + \, \epsilon  \, +  \,  \delta_n   \, 
\frac{1}{\lambda(1+\lambda)}.
$$
We can then find $n \in \mathbb{N}$ large and $\epsilon >0 $ small such that the previous inequality contradicts (\ref{eq:contradict1}) and thus also 
 (\ref{eq:lookcontra1}). We then found the desired contradiction and concluded the proof of the second claim of the theorem for $\delta \rightarrow 0^+$. 
 The proof for $\delta \rightarrow 0^-$ is analogous, hence the proof is concluded.

\end{proof}

We conclude with a remark about the generality of our results,
which hold on  graphs more general than unimodular.

\begin{remark}\label{rem:generality}
Relevant events do not depend on the clock realisations of the continuous time dynamics and have been defined in the framework of the Diaconis-Fulton representation, which is well-defined on any 
locally-finite  infinite connected  graph.  Hence,   our Theorem \ref{theo:strict monotonicity} can be stated in wider generality,  namely for any infinite connected locally-finite graph, provided that the probability measure $\mathbb{P}_{\lambda, \mu}$ is replaced by $\mathcal{P}_{\lambda, \mu}$ in 
(\ref{eq:statementmonotonicity}).

Moreover, contrary to  (\ref{eq:criticaldensity}), the  critical density
(\ref{eq:generaldefinitiondensity}) is well-defined on   any locally-finite  infinite connected  graph. 
Our proof of Theorem \ref{theo:continuity} then
 implies that $\zeta_c(\lambda)$
is a continuous function of $\lambda$ in $(0, \infty)$ on any locally-finite infinite connected graph,
provided that it is known that $\zeta_c(\lambda)< \infty$ for some $\lambda > 0$. 
\end{remark}

\section*{Acknowledgements}
This work started as the author was affiliated to Technische Universit\"at Darmstadt, it has been  carried on while the author was affiliated to the University of Bath, it was concluded as the author was affiliated to the Weierstrass Institute, Berlin,
it was revised as the author was affiliated to Sapienza Universit\`a di Roma. 
The author acknowledges support from DFG German Research Foundation BE/5267/1
and from  EPSRC Early Career Fellowship EP/N004566/1.
The author thanks the two anonymous referees for carefully reviewing the paper and their important and useful comments.    

\end{document}